\newtheorem{thm}{Theorem}[section]
\newtheorem{theorem}{Theorem}[section]
\newtheorem{proposition}[thm]{Proposition}
\newtheorem{lemma}[thm]{Lemma}
\def\cA{{\cal A}}
\def\cF{{\cal F}}
\def\cH{{\cal H}}
\def\cI{{\cal I}}
\def\cL{{\cal L}}
\def\R{{\mathbb{R}}}
\DeclareMathOperator{\spann}{span}
\DeclareMathOperator*{\argmin}{arg\,min}
\title{Subgame Perfect Methods in Nonsmooth Convex Optimization}
\author{
    Benjamin Grimmer\footnote{Johns Hopkins University, Department of Applied Mathematics and Statistics, \texttt{grimmer@jhu.edu}}
    \and
    Alex L.\ Wang\footnote{Purdue University, Daniels School of Business, \texttt{wang5984@purdue.edu}}
}
\date{}
\begin{document}
\maketitle

\begin{abstract}
    This paper considers nonsmooth convex optimization with either a subgradient or proximal operator oracle. In both settings, we identify algorithms that achieve the recently introduced game-theoretic optimality notion for algorithms known as subgame perfection.
    Subgame perfect algorithms meet a more stringent requirement than just minimax optimality. Not only must they provide optimal uniform guarantees on the entire problem class, but also on any subclass determined by information revealed during the execution of the algorithm. 
    In the setting of nonsmooth convex optimization with a subgradient oracle, we show that the Kelley cutting plane-Like Method due to Drori and Teboulle~\cite{drori2016Kelley} is subgame perfect.
For nonsmooth convex optimization with a proximal operator oracle, we develop a new algorithm, the Subgame Perfect Proximal Point Algorithm, and establish that it is subgame perfect. 
    Both of these methods solve a history-aware second-order cone program within each iteration, independent of the ambient problem dimension, to plan their next steps. This yields performance guarantees that are never worse than the minimax optimal guarantees and often substantially better.
\end{abstract}

\section{Introduction}

Consider a (potentially nonsmooth) convex minimization problem 
$$ \min_{x\in\mathbb{R}^d} f(x)$$
with unknown minimizer $x_\star$. We are particularly interested in the high-dimensional regime where $d$ may be arbitrarily large. To design iterative methods for such problems, one typically assumes an oracle access model for $f$. The two most fundamental oracle models for nonsmooth optimization are the subgradient oracle
$$x\mapsto (f(x),g)\quad \text{where} \quad g\in \partial f(x) \coloneqq \{g\in\mathbb{R}^d \mid f(z) \geq f(x) + \langle g, z-x\rangle \quad \forall z\in\mathbb{R}^d\}$$
and $g\in\partial f(x)$ may be chosen adversarially; and the proximal operator oracle, 
$$ x\mapsto (f(y),y)\quad\text{where}\quad y=\mathrm{prox}_{L,f}(x)\coloneqq \argmin_{y\in\mathbb{R}^d} f(y) + \frac{L}{2}\|y-x\|^2 $$
for a given proximal parameter $L>0$. Throughout, we will consider algorithms 
given a fixed budget of $N$ evaluations of one of these two oracles, for producing approximate minimizers.

These two models of optimization have seen substantial study. The study of subgradient methods (those utilizing a subgradient oracle), was pioneered early on by~\cite{Shor1985,Lemarechal1975,Wolfe1975}.
Similarly, proximal methods (those utilizing a proximal operator oracle), have a rich history of study dating back to~\cite{martinet1970regularisation,martinet1972determination,Rockafellar1976_monotone,brezis1978produits}.

A classical goal in algorithm design is minimax optimal performance on a given problem class. That is, one often wants an algorithm with the best worst-case performance. To be concrete, consider designing a subgradient method, with a budget of $N$ subgradient oracle queries, for minimizing an $M$-Lipschitz continuous convex function $f$ given an initialization $x_0$ with $\|x_0-x_\star\|\leq R$ for some minimizer $x_\star$ of $f$. Denote the set of all such instances $(f,x_0)$ by $\mathcal{F}_{M,R}$. Denote by $\mathcal{A}$ the set of all (deterministic) subgradient-span methods with iterates satisfying
\begin{equation}\label{eq:subgrad-span}
    x_{n} \in x_0 + \mathrm{span}\{ g_0,\dots, g_{n-1}\}, \qquad g_i \in\partial f(x_i)
\end{equation}
for $n=1,\dots, N$.
That is, each $A\in\mathcal{A}$ is a deterministic map from a history (possibly empty) of query-response pairs $\left\{(x_0,f_0,g_0),\dots, (x_{n-1}, f_{n-1}, g_{n-1})\right\}$ to the next
query point $x_n\in x_0+\spann\{g_0,\dots, g_{n-1}\}$. Here, $f_{i}$ is shorthand for $f(x_i)$.
When $A\in\mathcal{A}$ and $(f,x_0)\in\mathcal{F}_{M,R}$ are clear from context, 
we will let $x_0, x_1,\dots,x_N$ denote the iterates produced by $A$ on the instance $(f,x_0)$. The worst-case final objective gap for $A$ is given by
$$ \max_{(f,x_0)\in \mathcal{F}_{M,R}} f(x_N) - f(x_\star). $$
An algorithm is ``minimax optimal'' if it attains the optimal worst-case performance guarantee:
\begin{equation}
    \min_{A\in \mathcal{A}}\max_{(f,x_0)\in \mathcal{F}_{M,R}} f(x_N) - f(x_\star).
    \label{eq:apriori-minimax-optimal}
\end{equation}

Methods that are within a universal constant factor of being minimax optimal for a wide range of problem/algorithm settings were designed in the 1980s by the seminal works~\cite{Nesterov1983,nemirovskij1983problem}. Since then,
exactly minimax optimal algorithms have been designed for many settings~\cite{Kim2016optimal,VanScoy2018,Cyrus2018,Park2021Factor,taylor2022optimal}, supported by matching lower bounding instances~\cite{drori2017exact,Drori2021OnTO}, in large part spurred by the Performance Estimation Program (PEP) methodology~\cite{drori2012PerformanceOF,taylor2017interpolation,taylor2017CompositePEP}.

Alas, minimax optimal algorithms often do not offer the best performance on ``typical'' or ``real-world'' problems.
In essence, minimax optimality is a uniform requirement and does not preclude an algorithm from performing ``suboptimally'' when the problem instance $(f,x_0)\in\cF_{M,R}$ is not chosen adversarially.

Recently, a theoretically grounded approach to strengthening minimax optimality was proposed by the present authors in~\cite{SPGM}. Instead of only requiring optimal guarantees on the \emph{entire} problem class, we require that the algorithm, \emph{at every iteration $n$}, offers the optimal guarantee against the \emph{subclass} of remaining problem instances agreeing with the oracle responses so far. That is, let $\cH=\{(x_i,f_i,g_i)\}_{i=0}^{n-1}$ denote the history of query-response pairs seen up to iteration $n$. Denote by $\mathcal{F}_{M,R}^\cH\subseteq \mathcal{F}_{M,R}$ the set of remaining problem instances $(f,x_0)$ agreeing with the given history, i.e., $f_i=f(x_i)$ and $g_i\in\partial f(x_i)$. Similarly, denote by $\mathcal{A}^\cH$ the set of all subgradient methods providing a continuation of the first $n$ iterates $x_0,\dots, x_{n-1}$ for the remaining $N-n+1$ iterations.\footnote{Formally, $\cA^\cH$ is the subset of $A\in\cA$ mapping
$\{(x_i, f_i, g_i)\}_{i=0}^{j-1}\mapsto x_j$ for all $j = 1,\dots, n-1$.
} Then, an algorithm is ``subgame perfect'' if at every iteration $n$ and for any observed history $\cH$ up to iteration $n$, it attains the best possible worst-case final objective gap in the remaining iterations
\begin{equation}
    \min_{A\in \mathcal{A}^\cH}\max_{(f,x_0)\in \mathcal{F}_{M,R}^\cH} f(x_N) - f(x_\star). \label{eq:dynamic-minimax-optimal}
\end{equation}
This is a strengthening of minimax optimality, since at iteration $n=0$, \eqref{eq:dynamic-minimax-optimal} reduces to~\eqref{eq:apriori-minimax-optimal}. The terminology ``subgame perfect'' stems from a game-theoretic perspective on~\eqref{eq:apriori-minimax-optimal} and~\eqref{eq:dynamic-minimax-optimal}: A minimax optimal method and a worst-case adversarial selection of the problem instance $(f,x_0)$ together provide a saddle point, or Nash Equilibrium, of~\eqref{eq:apriori-minimax-optimal}. 
In other words, a minimax optimal method corresponds to an optimal algorithm for playing \eqref{eq:apriori-minimax-optimal} \emph{if} $(f,x_0)$ is chosen adversarially.
On the other hand, a subgame perfect method and an associated subgame perfect adversary form a Subgame Perfect Nash Equilibrium for the associated sequential optimization game.
Intuitively, a subgame perfect method corresponds to an algorithm that optimally capitalizes on imperfect play by the adversary, while maintaining minimax optimality against fully adversarial play. This offers a principled beyond-worst-case guarantee for convex optimization.
We refer readers to~\cite{SPGM} for a more in-depth discussion of this game-theoretic perspective. Additionally, numerical experiments are given in~\cite{SPGM,ASPGM} showing strong practical gains from such refinements. 

This subgame perfect criteria was first proposed in the context of smooth convex minimization~\cite{SPGM}. In this work, we show that subgame perfect methods can also be designed for the setting of nonsmooth convex minimization. First, we show that the Kelley cutting plane-Like Method (KLM) of Drori and Teboulle~\cite{drori2016Kelley} is in fact subgame perfect for Lipschitz convex minimization with a subgradient oracle. Second, we design a new 
subgame perfect algorithm for convex optimization with a proximal oracle, which we call the 
Subgame Perfect Proximal Point Algorithm (SPPPA).

\paragraph{Contributions and organization.} In Section~\ref{sec:Kelley}, we begin by describing a minimax optimal subgradient method, KLM, of Drori and Teboulle~\cite{drori2016Kelley} and state its dynamic guarantees. We then show that this method is subgame perfect by constructing a matching dynamic lower bound. In Section~\ref{sec:SPPPA}, we turn to designing a new subgame perfect proximal point-type method. Our method, SPPPA, generalizes the known minimax-optimal proximal point method, improving it with a dynamic reoptimization of its inductive hypothesis at every iteration. In Section~\ref{sec:SPPPA-proof}, we prove that SPPPA is subgame perfect by examining associated dual certificates generated by these reoptimizations at each step to construct matching dynamic lower bounds.
We expect this process to generalize to yet more settings in the future, discussed briefly in Section~\ref{sec:conclusion}. \section{Subgame Perfect Subgradient Method}\label{sec:Kelley}

In this section, we consider unconstrained minimization of a convex $M$-Lipschitz function
\[
    \min_{x\in\R^d} f(x).
\]
That is, $f:\R^d\to\R$ is convex and $M$-Lipschitz.
Note that this ensures $\|g\|\le M$ for every $g\in\partial f(x)$ and $x\in\R^d$. 
We assume an optimal point $x_\star\in\arg\min f$ exists and that the given initialization $x_0$ satisfies $\|x_0-x_\star\|\le R$. Throughout, all norms are the Euclidean norm associated with the given inner product $\langle \cdot,\cdot\rangle$.

Given the iterates $x_0,\dots, x_N$, it is convenient to reason about the associated first-order (subgradient) data
$\{(x_i,f_i,g_i)\}_{i\in\cI}$ with $f_i=f(x_i)$ and $g_i\in\partial f(x_i)$ where $\cI=\{0,\dots,N\}$. At times, it will be useful to include the minimizer $x_\star$ in our set of observations; to do so, we set $g_\star=0$, $f_\star=f(x_\star)$ and $\cI_\star =\{0,\dots,N,\star\}$. Convexity of $f$ ensures that
\[
    Q_{i , j} \coloneqq  f_i - f_j - \langle g_j, x_i-x_j \rangle \geq 0\qquad\forall i,j\in \cI_\star,
\]
and the $M$-Lipschitz continuity of $f$ ensures that
\[
   S_i \coloneqq  M^2-\|g_i\|^2 \geq 0\qquad\forall i\in \cI_\star.
\]
That is, the nonnegativity of the above quantities is a necessary condition on the first-order data to have come from some $M$-Lipschitz convex function. A special case of the interpolation theorem of~\cite[Theorem 3.5]{taylor2017CompositePEP} states that this condition is also sufficient:
\begin{lemma}[{\cite[Theorem 3.5]{taylor2017CompositePEP}}]
Let $\{(x_i, f_i,g_i)\}_{i\in\cI_\star}\subseteq \R^d\times\R\times\R^d$. The quantities $Q_{i,j}$ and $S_i$ are nonnegative for all $i,j\in\cI_\star$ if and only if there exists an $M$-Lipschitz convex function $f$ satisfying
$$f_i = f(x_i)\quad\text{and}\quad g_i\in\partial f(x_i)\qquad\forall i\in\cI_\star.$$
\end{lemma}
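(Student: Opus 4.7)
The forward direction is immediate: if $f$ is an $M$-Lipschitz convex function with $f_i=f(x_i)$ and $g_i\in\partial f(x_i)$, then the subgradient inequality gives $Q_{i,j}\ge 0$ directly, and the Lipschitz bound $\|g\|\le M$ on every subgradient of an $M$-Lipschitz convex function gives $S_i\ge 0$; the indices $i=\star$ and $j=\star$ are also fine, since at the minimizer $0\in\partial f(x_\star)$.

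For the converse, my plan is to exhibit the standard piecewise-affine interpolant
\[
    f(x) \coloneqq \max_{i\in\cI_\star} \bigl[ f_i + \langle g_i, x - x_i \rangle \bigr].
\]
As a finite maximum of affine functions, $f$ is convex. Each affine piece has slope $g_i$ with $\|g_i\|\le M$ (using $S_i\ge 0$ for $i\ne\star$ and $g_\star=0$ for $i=\star$), so $f$ is $M$-Lipschitz.

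The two remaining requirements both fall out of the nonnegativity of $Q_{i,j}$. To see that $f(x_j)=f_j$ for each $j\in\cI_\star$, observe that the $i$-th affine piece evaluated at $x_j$ equals $f_i+\langle g_i, x_j-x_i\rangle = f_j - Q_{j,i} \le f_j$, so the maximum is attained by the $j$-th piece and equals $f_j$. To see that $g_j\in\partial f(x_j)$, note that by definition of the max we have, for every $z\in\R^d$,
\[
    f(z) \;\ge\; f_j + \langle g_j, z-x_j\rangle,
\]
and we have just shown $f(x_j)=f_j$, which is exactly the subgradient inequality for $g_j$ at $x_j$.

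There is no serious obstacle here: the only thing to watch is the role of the auxiliary index $\star$, where $g_\star=0$ and $f_\star=f(x_\star)$. The condition $Q_{\star,i}\ge 0$ is what guarantees that the $\star$-piece (the constant function $f_\star$) does not exceed the other pieces at $x_\star$, while $Q_{i,\star}\ge 0$ is exactly $f_i\ge f_\star$, consistent with $x_\star$ being a minimizer of the constructed $f$. Thus the same construction handles the $\star$ case uniformly, and the lemma follows.
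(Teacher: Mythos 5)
Your proof is correct and follows exactly the construction the paper suggests: the max-of-affine interpolant $f(y)=\max_{i\in\cI_\star}\{f_i+\langle g_i,y-x_i\rangle\}$, with $Q_{i,j}\ge 0$ giving interpolation and the subgradient property, and $S_i\ge 0$ (together with $g_\star=0$) giving $M$-Lipschitz continuity. The paper only gives a one-line sketch of this argument, so your write-up supplies precisely the details they leave implicit.
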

This lemma can also be proved directly by setting $f(y) = \max_{i\in\cI_\star}\{f_i + \langle g_i, y-x_i \rangle \}$.

\subsection{The Kelley-Like Method of Drori and Teboulle~\cite{drori2016Kelley}}
The Kelley cutting plane Method~\cite{Kelley1960} is an early example of a ``bundle method'': an algorithm maintaining a bundle of first-order information $\{(x_i,f_i,g_i)\}_{i=0}^{n-1}$ at each iteration $n$, used to inform the selection of the next iterate $x_n$.
Such methods have a long history of practical success, being studied theoretically by~\cite{Lemarechal1975,Wolfe1975,Kiwiel1983,Hiriart1993,Ruszczynski2006,Warren2010,Lan2015,Du2017,DiazGrimmer2023,Liang2021} among many other works.

Drori and Teboulle~\cite{drori2016Kelley} designed a variant of the Kelley cutting plane method seeking to optimize the iterate selection against the hardest possible function consistent with the observed subgradients. Their resulting method was developed by reformulating and relaxing the search for the hard instances through a series of mathematical programs, eventually providing a tractable planning problem to be solved at every iteration.
From this, they propose their Kelley cutting plane-Like Method (KLM), which solves this planning problem at every iteration to dynamically respond to observed subgradients. In particular, consider an observed history of first-order evaluations prior to iteration $n$, $\cH=\{(x_i,f_i,g_i)\}_{i=0}^{n-1}$ where $f_i = f(x_i)$ and $g_i\in\partial f(x_i)$. KLM sets 
$f_{n-1/2}= \min_{i=0,\dots,n-1} f_i$ as the minimum function value observed so far, then solves the second-order cone program
\begin{equation} \label{eq:Kelley-solve}
    \Theta_n = \begin{cases}
        \max\limits_{y\in\R^d, \zeta, t\in\R} & f_{n-1/2} - t\\
        \mathrm{s.t.} & t \geq f_i + \langle g_i, y-x_i \rangle \qquad \forall i=0,\dots, n-1\\
        & f_{n-1/2} - M\zeta \leq t\\
        &\|y-x_0\|^2 + (N-n+1)\zeta^2 \leq R^2.
    \end{cases}
\end{equation}

Observe that (i) $\Theta_n \geq 0$ since $y=x_\star,\zeta=0,t=f_{n-1/2}$ is a feasible solution and (ii) there exists an optimal solution with $y\in x_0 + \mathrm{span}\{g_0,\dots,g_{n-1}\}$. 
Letting $(y, \zeta, t)$ denote some such optimal solution to this problem in the $n$th iteration, KLM will iterate by setting $x_n = y$.  Since $y\in x_0 +\mathrm{span}\{g_0,\dots,g_{n-1}\}$, this update falls within the (sub)gradient span model~\eqref{eq:subgrad-span}.

The KLM is formalized in Algorithm~\ref{alg:Kelley}. As input, this method requires bounds on the problem Lipschitz constant $M$, the distance to a minimizer $R$, and the total iteration budget $N$. In~\cite{drori2016Kelley}, it is noted that one can freely, at some iterations, take a regular subgradient method step instead of the optimized step resulting from the planning subproblem. We omit this from our presentation as the resulting algorithm is no longer subgame perfect.

\begin{algorithm}[H]
    \caption{The Kelley cutting plane-Like Method (KLM)}
    \label{alg:Kelley}
    Given convex $f$, initial iterate $x_0$, Lipschitz constant $M$, distance bound $R$, iteration count $N$
    \begin{itemize}
        \item For $n=0$, define $\Theta_0 = MR/\sqrt{N+1}$
        \item For $n = 1,2,\dots,N$
        \begin{itemize}
            \item Query oracle to get $f_{n-1}=f(x_{n-1})$ and $g_{n-1} \in\partial f(x_{n-1}) $.
            \item Let $f_{n-1/2} = \min_{i\in 0,\dots, n-1}f_i$ and solve~\eqref{eq:Kelley-solve}, generating $\Theta_n$ and optimal $(y,\zeta,t)$.
            \item Set $x_n = y$.
        \end{itemize}
    \end{itemize}
    \end{algorithm}

The design philosophy of Drori and Teboulle is very similar in nature to the motivation behind the subgame perfect framework of~\cite{SPGM}, making KLM a plausible candidate for a subgame perfect subgradient method. In~\cite{drori2016Kelley}, they proved the following convergence upper bound for KLM.
\begin{theorem} \label{thm:Kelley-rate}
    For any $(f,x_0)\in\mathcal{F}_{M,R}$ and $N\geq 0$, the output $x_N$ of KLM is guaranteed to satisfy
    $$ f(x_N) - f(x_\star) \leq  \Theta_N \leq \dots \leq  \Theta_1 \leq \Theta_0= \frac{MR}{\sqrt{N+1}}.$$
\end{theorem}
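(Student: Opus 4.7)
The plan is to establish the claimed chain of inequalities in three distinct pieces: the defining identity $\Theta_0 = MR/\sqrt{N+1}$, the monotonicity $\Theta_n \le \Theta_{n-1}$ for $1 \le n \le N$, and the convergence bound $f(x_N) - f(x_\star) \le \Theta_N$. The first piece is immediate from Algorithm~\ref{alg:Kelley}, so the actual work lies in the latter two.

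For the monotonicity, I would take an optimal primal solution $(y^\star, \zeta^\star, t^\star)$ to \eqref{eq:Kelley-solve} at iteration $n$ and lift it to a feasible primal solution at iteration $n-1$ of objective value at least $\Theta_n$. The SOCPs at iterations $n$ and $n-1$ differ in three places: iteration $n$ has an additional linear constraint at $i = n-1$, the objective offset can decrease ($f_{n-1/2} \le f_{(n-1)-1/2}$), and the ball coefficient tightens from $N-n+2$ to $N-n+1$. The natural candidate sets $y = y^\star$ (so the new $i = n-1$ constraint simply disappears), rescales $\zeta$ to absorb the larger coefficient on the left-hand side of the ball constraint, and shifts $t$ upward by $f_{(n-1)-1/2} - f_{n-1/2}$ so that the objective $f_{(n-1)-1/2} - t$ matches $f_{n-1/2} - t^\star = \Theta_n$. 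The linear convexity constraints for $i \le n-2$ transfer directly from iteration $n$; the technical point is checking that the Lipschitz-type constraint $t \ge f_{(n-1)-1/2} - M\zeta$ survives the rescaling of $\zeta$.

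For the convergence bound $f(x_N) - f(x_\star) \le \Theta_N$, I would exhibit a primal feasible point of \eqref{eq:Kelley-solve} at iteration $N$ achieving objective at least $f(x_N) - f(x_\star)$. The candidate is $y = x_\star$, $\zeta = \sqrt{R^2 - \|x_\star - x_0\|^2}$ (saturating the ball constraint), and $t$ equal to the larger of $\max_i\{f_i + \langle g_i, x_\star - x_i\rangle\}$ (which is at most $f(x_\star)$ by convexity) and $f_{N-1/2} - M\zeta$. Convexity and the hypothesis $\|x_\star - x_0\| \le R$ secure feasibility, and the interplay between the Lipschitz bound $f(x_N) - f(x_\star) \le M\|x_N - x_\star\|$ and the fact that $x_N = y^\star_N$ is the SOCP's maximizer should convert the objective $f_{N-1/2} - t$ into the desired bound.

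The main obstacle I anticipate is precisely this last step: the simple $y = x_\star$ feasibility argument easily bounds $f_{N-1/2} - f(x_\star)$, but upgrading to a bound on $f(x_N) - f(x_\star)$ requires leveraging how KLM's choice $x_N = y^\star_N$ controls the discrepancy between $f(x_N)$ and $f_{N-1/2}$. I expect this to be most naturally handled by a KKT/duality argument on the SOCP: forming a nonnegative linear combination of the interpolation inequalities $Q_{i,\star} \ge 0$, $Q_{\star,i} \ge 0$, and $S_i \ge 0$ with weights given by the SOCP's optimal dual multipliers should yield a direct certificate that $f(x_N) - f(x_\star) \le \Theta_N$.
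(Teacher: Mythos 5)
First, a framing point: the paper does not prove Theorem~\ref{thm:Kelley-rate}. It explicitly remarks that ``the derivation of this dynamic sequence of upper bounds is nontrivial, requiring substantial intermediate developments'' and defers entirely to Drori and Teboulle~\cite{drori2016Kelley}, so there is no in-paper argument to compare you against.

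On the merits, your sketch has two genuine gaps, both of which you flag but do not close. For the monotonicity $\Theta_n \le \Theta_{n-1}$, the lifted candidate $\bigl(y^\star,\ \zeta^\star\sqrt{(N-n+1)/(N-n+2)},\ t^\star + \Delta\bigr)$ with $\Delta = f_{(n-1)-1/2} - f_{n-1/2}$ does preserve the ball constraint when that constraint is tight at iteration $n$, but the shrunken $\zeta$ makes the Lipschitz constraint require $\Theta_n \le M\zeta^\star\sqrt{(N-n+1)/(N-n+2)}$, whereas feasibility at iteration $n$ only yields the weaker $\Theta_n \le M\zeta^\star$. In the generic case where both the ball and Lipschitz constraints are active at the iteration-$n$ optimizer (so $\Theta_n = M\zeta^\star$), your lift is strictly infeasible; a purely primal lift that ignores the structural fact $x_{n-1}=y^\star_{n-1}$ therefore cannot carry this step. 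For the convergence bound $f(x_N) - f_\star \le \Theta_N$, your $y=x_\star$ feasibility certificate at best controls $f_{N-1/2}-f_\star$ (and even that only when $f_{N-1/2}-M\zeta \le f_\star$), whereas the theorem bounds $f(x_N)-f_\star$ for the \emph{fresh, not-yet-queried} point $x_N=y^\star_N$; nothing in the feasibility argument relates $f(x_N)$ to $f_{N-1/2}$. You correctly identify that a dual certificate built from nonnegative combinations of $Q_{i,j}\ge 0$, $Q_{\star,i}\ge 0$, $Q_{i,\star}\ge 0$ and $S_i\ge 0$ with the SOCP's optimal multipliers is the natural route, but writing ``I expect this to be most naturally handled by a KKT/duality argument'' is not such a certificate; exhibiting and verifying it is exactly the nontrivial content of~\cite{drori2016Kelley} that the present paper chooses not to reproduce.
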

\noindent The derivation of this dynamic sequence of upper bounds is nontrivial, requiring substantial intermediate developments. We refer readers to~\cite{drori2016Kelley} for its proof.

In addition to this upper bound, Drori and Teboulle provide a lower bounding instance (assuming $d\geq N+1$) showing that no subgradient method can achieve a worst-case convergence guarantee strictly better than $MR/\sqrt{N+1}$. As a result, KLM is minimax optimal for Lipschitz convex minimization in the sense of~\eqref{eq:apriori-minimax-optimal}. However, this single hard instance is insufficient to prove that KLM is subgame perfect. Instead, one needs to construct a dynamic lower bound consistent with any observed history up to iteration $n$ matching the upper bound $\Theta_n$ at iteration $n$. The remainder of this section provides such a construction leading to the following strengthened guarantee for KLM:
\begin{theorem}\label{thm:Kelley-perfection}
    Let $N>0$ and $d\geq N+1$ and consider some iteration $0\leq n\leq N$.
    Suppose $\cH=\{(x_i,f_i,g_i)\}_{i=0}^{n-1}$ is the set of observed first-order history.
Then,
$$ \min_{A\in \mathcal{A}^\cH}\max_{(f,x_0)\in \mathcal{F}_{M,R}^\cH} f(A(f,x_0)) - f(x_\star) = \Theta_n. $$
That is, KLM is subgame perfect.
\end{theorem}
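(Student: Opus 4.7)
The upper bound $\min_{A\in\cA^\cH}\max_{(f,x_0)\in\cF_{M,R}^\cH}[f(x_N)-f(x_\star)]\le \Theta_n$ is immediate: taking $A$ to be the KLM continuation of $\cH$ and applying Theorem~\ref{thm:Kelley-rate} yields $f(x_N)-f(x_\star)\le \Theta_N\le\Theta_n$ by monotonicity of the $\Theta_k$ sequence. The substance of the proof is the matching lower bound, which I will establish by exhibiting, for every $A\in\cA^\cH$, an instance $(f,x_0)\in\cF_{M,R}^\cH$ together with an adversarial first-order oracle forcing $f(x_N)-f(x_\star)\ge\Theta_n$.

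\textbf{Construction of the hard instance.} Let $(y^*,\zeta^*,t^*)$ be a primal optimal solution of the SOCP~\eqref{eq:Kelley-solve}, so $\Theta_n=f_{n-1/2}-t^*$. After standard reductions, I can assume $M\zeta^*=\Theta_n$ and $y^*\in x_0+\spann\{g_0,\ldots,g_{n-1}\}$, so the distance constraint becomes $\|y^*-x_0\|^2 + (N-n+1)\Theta_n^2/M^2 \le R^2$. Using $d\ge N+1$, I pick $K:=N-n+1$ orthonormal vectors $e_1,\ldots,e_K\in\R^d$ in the orthogonal complement of $\spann\{g_0,\ldots,g_{n-1}\}$, define the candidate minimizer data
$$x_\star:=y^*-\zeta^*\sum_{k=1}^K e_k,\qquad f_\star:=t^*,\qquad g_\star:=0,$$
and set
$$f(x) := \max\left\{\max_{i=0}^{n-1}[f_i+\langle g_i,x-x_i\rangle],\ \max_{k=1}^K[f_{n-1/2}+M\langle e_k,x-y^*\rangle],\ f_\star\right\}.$$
This $f$ is convex and $M$-Lipschitz. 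Using $e_k\perp x_i-x_0$ and $e_k\perp y^*-x_0$, each new piece evaluates to $f_{n-1/2}\le f_i$ at $x_i$, so $f(x_i)=f_i$ and $g_i\in\partial f(x_i)$. At $x_\star$, each new piece evaluates to $f_{n-1/2}-M\zeta^* = t^*$ while each history piece evaluates to $f_i+\langle g_i,y^*-x_i\rangle\le t^*$ by SOCP feasibility, so $f(x_\star)=t^*$, and $\|x_\star-x_0\|^2 = \|y^*-x_0\|^2+K\zeta^{*2}\le R^2$. Hence $(f,x_0)\in\cF_{M,R}^\cH$.

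\textbf{The resisting oracle.} For each future query $x_{n+j}$ ($j=0,\ldots,N-n-1$), the oracle returns an extreme point of $\partial f(x_{n+j})$ chosen to avoid ever emitting $Me_K$: if a history subgradient or a previously returned $Me_k$ lies in the argmax, reuse it; otherwise return $Me_{r+1}$, where $r<K-1$ is the number of distinct new directions already revealed. The key observation is that if $r$ new directions have been revealed, the subgradient-span model forces $x_{n+j}\in x_0+\spann\{g_0,\ldots,g_{n-1},e_1,\ldots,e_r\}$, so $\langle e_k,x_{n+j}-y^*\rangle=0$ for every $k>r$; consequently the new pieces with $k>r$ all tie at the common value $f_{n-1/2}$, and $e_{r+1}$ lies in the argmax whenever any unrevealed new piece does. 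Since the algorithm makes only $N-n=K-1$ future queries, $e_K$ is never revealed, so $x_N\in x_0+\spann\{g_0,\ldots,g_{n-1},e_1,\ldots,e_{K-1}\}$ and $\langle e_K,x_N-x_0\rangle=0$. The $K$th new piece of $f$ then evaluates to $f_{n-1/2}$ at $x_N$, giving $f(x_N)\ge f_{n-1/2}$ and hence $f(x_N)-f(x_\star)\ge f_{n-1/2}-t^* = \Theta_n$.

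\textbf{Main obstacle.} The delicate step is verifying that the ``avoid $e_K$'' rule always returns a genuine element of $\partial f(x_{n+j})$. This requires casework on whether the argmax at $x_{n+j}$ consists of history pieces, already-revealed new pieces, or fresh new pieces, and leans crucially on the tie structure of unrevealed new pieces together with the inductive fact that no future iterate develops an $e_K$ component. The rest of the argument is essentially bookkeeping, with each needed inequality reducing to primal feasibility of $(y^*,\zeta^*,t^*)$ in~\eqref{eq:Kelley-solve}.
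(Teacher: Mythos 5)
Your proposal is correct and, after unwinding notation, builds the \emph{same} hard instance as the paper (your ``new pieces'' $f_{n-1/2}+M\langle e_k,x-y^*\rangle$ plus the constant $f_\star$ are exactly the affine pieces coming from the paper's future iterates $x_n,\dots,x_N,x_\star$, since each $e_j$ in the paper's anchors is orthogonal to all earlier $e_{j'}$). The ``main obstacle'' you flag --- that the resisting oracle always returns a genuine subgradient while avoiding $Me_K$ --- is precisely what the paper's Proposition~\ref{prop:zero_chain_Kelley} isolates and proves: for $x\in x_0+\spann\{g_0,\dots,g_{j-1}\}$, every unrevealed piece ties at $f_{n-1/2}$, so either a history/low-index piece is active or piece $j$ is, giving $\{g_0,\dots,g_j\}\cap\partial f_\cH(x)\neq\emptyset$; your casework closes the same way. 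The only cosmetic differences are that you normalize $M\zeta^*=\Theta_n$ up front (the paper keeps $\zeta$ free and uses the constraint $f_{n-1/2}-M\zeta\le t$ as an inequality in the distance check) and that you phrase the zero-chain step as an explicit adversary policy rather than a standalone lemma.
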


The hardness result of \cite{drori2016Kelley} is equivalent to the special case of this theorem with $n=0$.

\subsection{Proof of Theorem~\ref{thm:Kelley-perfection}}
This section contains a proof of Theorem~\ref{thm:Kelley-perfection}. 
Since the case $n=0$ is covered by \cite{drori2016Kelley}, we will assume $1\leq n \leq N$ and $d\geq N+1$.
Fix a set of observed first-order history $\{(x_i,f_i,g_i)\}_{i=0}^{n-1}$.

Our goal is to construct an instance $(f,x_0)\in\cF_{M,R}^\cH$ so that for all $A\in\cA^\cH$:
\begin{equation}
\label{eq:kelley_hardness}
f(A(f,x_0))- f(x_\star)\geq\Theta_n.
\end{equation}

Since $d\geq N+1$, we can choose orthonormal vectors $e_n,\dots,e_N$ perpendicular to $\mathrm{span}\{g_0,\dots,g_{n-1}\}$. 
By definition, $\Theta_n$ is the optimal value of \eqref{eq:Kelley-solve}.
Let $(y, \zeta, t)$ denote an optimizer of \eqref{eq:Kelley-solve}.
Define
\begin{equation}
    \label{eq:future_block_defs}
    x_i \coloneqq y-\frac{f_{n-1/2} - t}{M}\sum_{j=n}^{i-1} e_j,\qquad f_i \coloneqq f_{n-1/2}, \qquad g_i\coloneqq M e_i \qquad \text{for all }i=n,\dots,N,
\end{equation}
treating the sum above as $0$ when it is empty and
\[
    x_\star\coloneqq y-\frac{f_{n-1/2} - t}{M}\sum_{j=n}^N e_j, \qquad f_\star\coloneqq t, \qquad g_\star\coloneqq 0.
\]
Combining the given first-order data, $\{(x_i,f_i,g_i)\}_{i=0,\dots,n-1}$, with the constructed first-order data, $\{(x_i,f_i,g_i)\}_{i=n,\dots,N,\star}$, we may construct our candidate hard function as the associated max-of-affine function
\begin{equation}
\label{eq:hard_func_Kelley}
f_{\cH}(x)
\coloneqq \max_{i\in\cI_\star}\{f_i+\langle g_i,x-x_i\rangle\big\}.
\end{equation}

One may verify that $x_n,\dots,x_N$ are the iterates produced by KLM on this instance $(f_\cH,x_0)$.
This justifies the notation $x_n,\dots, x_N, x_\star$ and we refer to these quantities as the ``future iterates.'' 
We caution that the $n$ through $N$th iterates produced by some other algorithm $A\in\cA^\cH$ may not coincide with the above construction of $x_n,\dots, x_N$. Nonetheless, it will still hold that the \emph{instance} $(f_\cH,x_0)\in \cF_{M,R}^\cH$ is hard for all $A\in\cA^\cH$ in the sense of \eqref{eq:kelley_hardness}.

Below we verify that $(f_\cH,x_0)\in\cF^\cH_{M,R}$ and prove a needed ``zero-chain'' property. These facts can then be combined to give a short direct proof of Theorem~\ref{thm:Kelley-perfection}.

\begin{proposition}\label{prop:feasibility-Kelley}
    Suppose $d\geq N+1$ and $1\leq n \leq N$. Let $(f_\cH,x_0)$ denote the instance constructed above from $\cH=\{(x_i, f_i, g_i)\}_{i=0}^{n-1}$.
    It holds that $f_\cH$ is convex and $M$-Lipschitz,
    $$f_\cH(x_i)= f_i\qquad \partial f_\cH(x_i)\ni g_i\qquad\forall i\in\cI_\star,$$
    and $\|x_0-x_\star\|\leq R$.
    In particular, $(f_\cH, x_0)\in \cF_{M,R}^{\cH}$.
\end{proposition}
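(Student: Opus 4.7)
The plan is to apply the interpolation lemma (Lemma 2.1) to the max-of-affine definition of $f_\cH$, using the collection $\{(x_i,f_i,g_i)\}_{i\in\cI_\star}$ that combines the observed history with the points constructed in~\eqref{eq:future_block_defs}. By Lemma 2.1, this amounts to verifying $Q_{i,j}\geq 0$ and $S_i \geq 0$ for all $i,j\in\cI_\star$, which simultaneously yields convexity, $M$-Lipschitz continuity, and the interpolation identities $f_\cH(x_i)=f_i$, $g_i\in\partial f_\cH(x_i)$. The $M$-Lipschitz bound $S_i\geq 0$ is immediate: for $i<n$ it is inherited from the valid observed data, for $n\leq i\leq N$ we have $g_i=Me_i$ with $\|e_i\|=1$, and $g_\star=0$.

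The nontrivial part is checking $Q_{i,j}\geq 0$, which I would handle by a case analysis. When both $i,j<n$ the inequality comes from the observed data's consistency with a convex function. When $j<n$ and $i\geq n$ (or $i=\star$), the key observation is that $x_i - y$ lies in $\spann\{e_n,\dots,e_N\}$, which is orthogonal to $g_j$; combined with the SOCP constraint $f_j+\langle g_j,y-x_j\rangle \leq t$ and $f_i\geq t$ (which uses $\Theta_n\geq 0$), this gives $Q_{i,j}\geq f_i - t\geq 0$. When $i<n$ and $j\geq n$, the same orthogonality argument (together with the subgradient-span property, which ensures $x_i - x_0\perp e_j$) reduces $Q_{i,j}$ to $f_i - f_{n-1/2}$, which is nonnegative by the definition of $f_{n-1/2}$. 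When both $i,j\in\{n,\dots,N,\star\}$, a short direct computation with the orthonormal frame $e_n,\dots,e_N$ shows $Q_{i,j}$ equals either $0$ or $f_{n-1/2}-t=\Theta_n\geq 0$ depending on whether $e_j$ appears in the relevant partial sum.

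Finally, for $\|x_0-x_\star\|\leq R$, I would use the Pythagorean identity afforded by the orthogonality of $e_n,\dots,e_N$ to $y-x_0$, which yields
\[
    \|x_\star - x_0\|^2 = \|y-x_0\|^2 + (N-n+1)\left(\frac{f_{n-1/2}-t}{M}\right)^2.
\]
At an optimal solution of~\eqref{eq:Kelley-solve}, the constraint $f_{n-1/2}-M\zeta\leq t$ can be assumed tight (otherwise one could decrease $\zeta$ and strictly relax the ball constraint without changing the objective), so $\zeta=(f_{n-1/2}-t)/M$, and the SOCP constraint $\|y-x_0\|^2+(N-n+1)\zeta^2 \leq R^2$ directly gives $\|x_\star-x_0\|\leq R$.

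I expect the principal obstacle to be keeping the sign conventions and orthogonality reductions organized across the many $Q_{i,j}$ cases; each individual case is a one-line manipulation, but the bookkeeping in the cases involving indices straddling $n$ requires care in using both the orthogonality of the $e_j$'s to $\spann\{g_0,\dots,g_{n-1}\}$ and the specific SOCP constraints at the optimizer $(y,\zeta,t)$.
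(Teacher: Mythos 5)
Your proposal matches the paper's proof in structure and substance: a case-by-case verification that all $Q_{i,j}\ge 0$ (exploiting the SOCP constraints and the orthogonality of the $e_j$'s to $\spann\{g_0,\dots,g_{n-1}\}$), followed by the Pythagorean computation for $\|x_0-x_\star\|$. The only cosmetic divergence is in the distance bound: you argue that the constraint $f_{n-1/2}-M\zeta\le t$ may WLOG be taken tight, while the paper simply observes $\bigl(\tfrac{f_{n-1/2}-t}{M}\bigr)^2\le\zeta^2$ directly from that constraint together with $\Theta_n\ge 0$ — slightly shorter, but both are valid.
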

\begin{proof}
    As $f_\cH$ is a finite maximum of $M$-Lipschitz affine functions, it follows that it is convex and $M$-Lipschitz.

    We next check that $f_\cH(x_i)=f_i$ and $\partial f_\cH(x_i)\ni g_i$ for all $i\in\cI_\star$. Inspecting \eqref{eq:hard_func_Kelley}, we see it suffices to check that the affine function $f_i + \langle g_i, y-x_i\rangle$ is active at $x_i$, i.e., that
    $$Q_{i,j}\coloneqq f_i - f_j - \langle g_j, x_i - x_j\rangle\geq 0\qquad\forall i,j\in\cI_\star.$$

    We break this verification into cases:
    First, suppose $i\in\{0,\dots,n-1\}$. Then,
    \begin{align*}
        j\in\{0,\dots,n-1\} \ &: \ f_i \geq f_j + \langle g_j, x_i-x_j\rangle,\\
        j\in\{n,\dots,N\} \ &: \ f_i \geq f_{n-1/2} = f_j + \langle g_j, x_i-x_j \rangle, \text{ and}\\
        j = \star \ &: \ f_i \geq f_{n-1/2}\geq f_\star =  f_\star + \langle g_\star, x_i-x_\star \rangle.
    \end{align*}
    The first case comes from the fact that the observed first-order history must come from some convex function; the second case uses the definition of $f_{n-1/2}$ and the orthogonality of $g_j = M e_j$ to the span $\mathrm{span}\{g_0,\dots, g_{n-1}, \dots, g_{j-1}\}$ which contains both $x_i-x_0$ and $x_j -x_0$ for the equality; and the third case uses that 
    $\Theta_n\geq 0$ so that
    $f_\star \leq f_{n-1/2}$ and that $g_\star=0$.
    
    Next, suppose $i\in\{n,\dots,N\}$. Then,
    \begin{align*}
        j\in\{0,\dots,n-1\} \ &: \ f_i = f_{n-1/2} = t + \Theta_n \geq f_j + \langle g_j, y -x_j\rangle + \Theta_n \geq f_j + \langle g_j, x_i -x_j\rangle ,\\
        j\in\{n,\dots,N\} \ &: \ f_i = f_{n-1/2} = f_j \geq f_j + \langle g_j, x_i-x_j \rangle, \text{ and}\\
        j = \star \ &: \ f_i = f_{n-1/2} = f_\star + \Theta_n \geq f_\star = f_\star + \langle g_\star, x_i - x_\star \rangle.
    \end{align*}    
    The first case uses the first constraint in~\eqref{eq:Kelley-solve} and that $\Theta_n \geq 0$; the second case uses that $\langle g_j, x_i-x_j \rangle$ is zero when $j\geq i$ and negative when $j<i$; and the third case uses that $g_\star=0$.

    Finally, suppose $i=\star$. Then,
    \begin{align*}
        j\in\{0,\dots,n-1\} \ &: \ f_\star = t \geq f_j + \langle g_j, y -x_j\rangle = f_j + \langle g_j, x_\star -x_j\rangle , \text{ and}\\
        j\in\{n,\dots,N\} \ &: \ f_\star = f_{n-1/2} - \Theta_n = f_j + \left\langle Me_j, -\frac{f_{n-1/2} - t}{M}e_j \right\rangle = f_j + \langle g_j, x_\star-x_j \rangle.
    \end{align*}    
    The first case uses the first constraint in~\eqref{eq:Kelley-solve} and the orthogonality of $g_j$ to $x_\star - y$.
    
    We deduce that
    \begin{equation*}
        f_\cH(x_i) = f_i \qquad  g_i \in \partial f_\cH(x_i)\qquad \text{for all }i\in \cI_\star.
    \end{equation*}
    Since $g_\star=0\in\partial f_\cH(x_\star)$, $x_\star$ must be a minimizer of $f_\cH$. 

    It remains to verify the needed distance from initialization to a minimizer bound:
    $$\|x_0-x_\star\|^2 = \|y - x_0\|^2 + (N-n+1) \left(\frac{f_{n-1/2}-t}{M}\right)^2 \leq \|y - x_0\|^2 + (N-n+1)\zeta^2 \leq R^2, $$
    where the equality follows from orthogonality of each $e_i$ in our definition of $x_\star$ and the two inequalities use the constraints respectively $f_{n-1/2} - M\zeta \leq t$ and $\|y-x_0\|^2 + (N-n+1)\zeta^2 \leq R^2$.
\end{proof}

\begin{proposition}
\label{prop:zero_chain_Kelley}
Fix $j\in\{n,\dots,N-1\}$ and let $x\in x_0+\mathrm{span}\{g_0,\dots,g_{j-1}\}$. Then
\[
\{g_0, \dots, g_j\}\cap \partial f_{\cH} (x) \neq \emptyset.
\]
\end{proposition}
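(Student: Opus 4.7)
My plan is to show that whenever $x\in x_0+\mathrm{span}\{g_0,\dots,g_{j-1}\}$, every affine piece defining $f_\cH(x)$ with index $i\in\{j,j+1,\dots,N,\star\}$ evaluates to at most $f_{n-1/2}$, with equality holding at $i=j$. Combined with the fact that indices $i\in\{0,\dots,j-1\}$ all lie in $\{0,\dots,j\}$ anyway, this will force the maximum in~\eqref{eq:hard_func_Kelley} to be attained at some index in $\{0,\dots,j\}$.

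First I would set up the orthogonality bookkeeping. By the observation in the excerpt that the Kelley subproblem admits an optimizer with $y\in x_0+\mathrm{span}\{g_0,\dots,g_{n-1}\}$, we have $y - x_0 \in \mathrm{span}\{g_0,\dots,g_{j-1}\}$, so the hypothesis $x-x_0\in\mathrm{span}\{g_0,\dots,g_{j-1}\}$ gives $x-y\in\mathrm{span}\{g_0,\dots,g_{j-1}\}$. Now $g_0,\dots,g_{j-1}$ all lie in $\mathrm{span}\{g_0,\dots,g_{n-1}\}\cup\{e_n,\dots,e_{j-1}\}$, and the $e_i$ with $i\geq j$ are by construction orthogonal to this span. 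Hence $\langle e_i,x-y\rangle = 0$ for all $i\geq j$.

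Next, for indices $i\in\{n,\dots,N\}$, I would directly compute, using $x_i = y - \frac{f_{n-1/2}-t}{M}\sum_{k=n}^{i-1}e_k$ and orthonormality of the $e_k$,
\[
    f_i + \langle g_i, x - x_i\rangle
    = f_{n-1/2} + M\langle e_i, x - y\rangle + (f_{n-1/2}-t)\Big\langle e_i, \sum_{k=n}^{i-1} e_k\Big\rangle
    = f_{n-1/2} + M\langle e_i, x-y\rangle.
\]
The second summand vanishes since $i\notin\{n,\dots,i-1\}$. For $i\geq j$ the previous paragraph gives that this piece equals exactly $f_{n-1/2}$. For $i=\star$ the piece is simply $f_\star = t \leq f_{n-1/2}$ because $\Theta_n\geq 0$.

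To conclude, $f_\cH(x)\geq f_{n-1/2}$ since the $i=j$ piece attains this value. If $f_\cH(x) = f_{n-1/2}$ the $i=j$ piece is active and $g_j\in\partial f_\cH(x)$; otherwise $f_\cH(x) > f_{n-1/2}$, and no piece with $i\in\{j+1,\dots,N,\star\}$ can achieve the maximum (all are $\leq f_{n-1/2}$), so the maximum must be attained by some $i\in\{0,\dots,j-1\}$, yielding $g_i\in\{g_0,\dots,g_j\}\cap \partial f_\cH(x)$. The only real obstacle is keeping the orthogonality bookkeeping straight across the three index regimes $\{0,\dots,n-1\}$, $\{n,\dots,N\}$, and $\{\star\}$; once the reduction $\langle e_i,x-y\rangle=0$ for $i\geq j$ is in place, everything falls out of the explicit form of the affine pieces.
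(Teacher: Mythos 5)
Your proof is correct and follows essentially the same route as the paper's: the key step is using orthogonality of $e_i$ (for $i\geq j$) to $\mathrm{span}\{g_0,\dots,g_{j-1}\}\ni x-y$ to show that each affine piece with index $i\in\{j,\dots,N\}$ evaluates to exactly $f_{n-1/2}$ at $x$, hence the active piece can always be taken with index in $\{0,\dots,j\}$. Your writeup is slightly more explicit than the paper's in handling the $i=\star$ piece (noting $f_\star=t\leq f_{n-1/2}$), which the paper leaves implicit, but this is a stylistic difference rather than a different argument.
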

\begin{proof}
Consider an $x\in x_0+\mathrm{span}\{g_0,\dots,g_{j-1}\}$.
Let $i\in\cI_\star$ denote an active component of $f_\cH(x)$ in the definition \eqref{eq:hard_func_Kelley}.
If $i<j$, then $g_i\in\partial f_\cH(x)$ and we are done.
Otherwise, suppose $i\geq j$. Note by orthogonality, that for every $i\geq j$,
\[
f_i+\langle g_i,x-x_i\rangle
=f_i=f_{n-1/2}.
\]
As a result, $g_j\in\partial f_\cH(x)$ and we are done.
\end{proof}

We are now ready to prove Theorem~\ref{thm:Kelley-perfection}.
\begin{proof}[Proof of Theorem~\ref{thm:Kelley-perfection}]
By Proposition~\ref{prop:feasibility-Kelley}, the instance $(f_\cH,x_0)\in\cF_{M,R}^\cH$.
Now consider an arbitrary deterministic subgradient-span method $A\in\cA^\cH$ responsible for producing iterates $x_n^A, x_{n+1}^A, \dots, x_N^A$.
By the subgradient-span condition, $x_n^A \in x_0 + \spann\{g_0,\dots,g_{n-1}\}$.
Inductively, suppose that for $n\leq j < N$, $x_j^A \in x_0 + \spann\{g_0,\dots, g_{j-1}\}$.
By Proposition~\ref{prop:zero_chain_Kelley}, an adversarial oracle could provide some subgradient in $\{g_0,\dots,g_{j}\}$, so that by the subgradient-span condition, $x_{j+1}^A\in x_0 + \spann\{g_0,\dots,g_j\}$.
We deduce that $x_N^A \in x_0 + \spann\{g_0,\dots,g_{N-1}\}$.
As a result,
    \begin{equation*}
        f_\cH(x_N^A) \geq f_N + \langle g_N, x_N^A - x_N\rangle = f_N = f_\star + (f_{n-1/2}-t) = f_\star + \Theta_n,
    \end{equation*}
    where the inequality restricts to the $N$th affine term defining $f_\cH$ and the first equality uses orthogonality of $g_N=Me_N$ to $\mathrm{span}\{g_0,\dots, g_{N-1}\}$ which contains $x_N^A - x_N$.
\end{proof} \section{The Subgame Perfect Proximal Point Algorithm (SPPPA)} \label{sec:SPPPA}
We next design a subgame perfect algorithm in the setting of nonsmooth convex optimization with a proximal operator oracle. 

Let $L_0,L_1,\dots,L_N$ denote a fixed sequence of parameters. 
Our family of algorithms in this setting consists of any algorithm of the form:
Given an initialization $x_0$, iterate for $n = 1,\dots,N$: query the proximal operator oracle at $x_{n-1}$ to receive
\begin{equation}
\begin{cases}
y_{n-1} = \mathrm{prox}_{L_{n-1},f}(x_{n-1})\\
g_{n-1} = L_{n-1}(x_{n-1} - y_{n-1}) \in\partial f(y_{n-1})\\
f_{n-1} = f(y_{n-1})
\end{cases};\label{eq:general-form-prox-methods}
\end{equation}
then pick the next query point
\begin{equation*}
    x_n \in x_0 + \spann\{g_0,\dots,g_{n-1}\}
\end{equation*}
as a function of the first-order history $\{(x_i, f_i, g_i)\}_{i=0}^{n-1}$.
The fact that $g_{n-1}\in\partial f(y_{n-1})$ is an immediate consequence of the optimality condition governing the proximal step's computation. 
We caution the reader that
in the proximal operator setting, we measure objective values at $y_i$ (in contrast to the subgradient oracle setting, where we evaluated objective values at $x_i$). 

Define $y_\star = x_\star\in \argmin_{x} f(x)$, $f_\star = f(y_\star) = \min_x f(x)$, and $g_\star = 0 \in \partial f(y_\star)$.
We see that $g_i \in\partial f(y_i)$ for all $i\in\cI_\star$ so that 
\begin{equation}
    \label{eq:prox_nonnegative}
    Q_{i,j} \coloneqq f_i - f_j - \langle g_j, y_i -y_j\rangle \geq 0
\end{equation}
for all $i,j\in\cI_\star$.

In our design of proximal point methods, we will not assume either Lipschitz continuity of $f$ or a bound on $\|x_0-y_\star\|$. Instead, the target class of problem instances is all $(f,x_0)$ with $f$ closed, convex, proper, and attaining a minimum at some $y_\star$.
In this setting, a natural goal is to seek to reduce the \emph{normalized} suboptimality
$$ \frac{f(y_N) - f(y_\star)}{\tfrac{1}{2}\|x_0-y_\star\|^2}.$$
Indeed, as $\|x_0 - y_\star\|$ is allowed to be arbitrarily large, it is impossible to provide any bound on the \emph{unnormalized} suboptimality for any proximal point method.

A series of classic works~\cite{guler1991convergence,guler1992new,Monteiro2013hybrid,Barre2023principle} have developed minimax optimal methods for this task (although their exact optimality was only recently proven in~\cite{OPTIsta}).
Below, in Section~\ref{subsec:OPPA}, we discuss a minimax
optimal algorithm, called the Optimized Proximal Point Algorithm (OPPA), and present an inductive proof of its convergence guarantee.
In Section~\ref{subsec:SPPPA-design}, we will introduce the Subgame Perfect Proximal Point Algorithm (SPPPA), a modification of OPPA that dynamically optimizes the inductive statement within the proof of OPPA using observed first-order information.
The remainder of Section~\ref{sec:SPPPA} will prove dynamic upper bounds on the convergence rate of SPPPA. Dynamic lower bounds will be presented in Section~\ref{sec:SPPPA-proof}, thereby showing that SPPPA is subgame perfect.

Note that the setting of proximal operators has substantial similarities to the setting of smooth convex optimization. In particular, if one fixes all $L_n=L$ to be constant, then proximal operator steps are exactly gradient steps on the objective function's Moreau envelope. Correspondingly, our developed method and analysis here closely mirror those of the subgame perfect gradient method~\cite{SPGM}, recovering the core operations of SPGM in the special case of a constant proximal parameter sequence.

\subsection{The Existing Optimized Proximal Point Algorithm (OPPA)} \label{subsec:OPPA}
We now describe a minimax optimal method, which we refer to as the ``Optimized Proximal Point Algorithm'' (OPPA). This method was first proposed in the case of constant proximal parameters $L_n=L$ by G\"uler~\cite{guler1992new}. Generalizations allowing for any $L_n$ as well as inexactness in the evaluation of proximal operators were given by~\cite{Monteiro2013hybrid} and~\cite{Barre2023principle}, which reduce to OPPA when proximal operator evaluations are exact.

OPPA maintains four sequences of iterates $x_n\in\R^d$, $y_n\in\R^d$, $z_n\in\R^d$ and $\tau_n\in\R$ via the following induction:
$x_0$ is given;
initialize $\tau_0 = 2/L_0$ 
and  $z_1 = x_0 - \tau_0 g_0$; 
using the proximal operator oracle, inductively define for $n=1,2,\dots$
\begin{equation}\label{eq:OPPA}
\begin{cases}
    y_{n-1} &= \mathrm{prox}_{L_{n-1},f}(x_{n-1})\\
    g_{n-1} &= L_{n-1}(x_{n-1} - y_{n-1}) \in\partial f(y_{n-1}) \\
    \tau_n & = \tau_{n-1} + \frac{1}{L_n}(1 + \sqrt{1+2L_n\tau_{n-1}})\\
    x_{n} &= \frac{\tau_{n-1}}{\tau_n} y_{n-1} + \frac{\tau_{n}-\tau_{n-1}}{\tau_n} z_n\\
    z_{n+1} &= z_n - (\tau_{n}-\tau_{n-1}) g_n 
\end{cases}.
\end{equation}
It will be conceptually useful to view $z_{n+1}$ as being defined in the $n$th iteration, although it cannot explicitly be computed until the $(n+1)$th iteration, after $g_n$ is observed.
OPPA's design and proof strategy maintains a specific inductive hypothesis on its iterate sequences.
Our design of SPPPA and its proof will use these same ideas.

For $n\geq 0$, define the expression
\begin{equation}
    \label{eq:oppa_induction}
    H_n \coloneqq \tau_n(f_\star - f_n)  - \frac{1}{2}\|z_{n+1} - y_\star\|^2+ \frac{1}{2}\|x_0 - y_\star\|^2.
\end{equation}

\begin{lemma}
\label{lem:OPPA-base-case}
For any closed convex proper $f$ with minimizer $x_\star=y_\star\in\R^d$ and any $x_0\in\R^d$, $H_0\geq 0$.
\end{lemma}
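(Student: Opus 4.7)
The plan is to unpack all definitions at $n=0$ and reduce $H_0 \geq 0$ to a single application of the subgradient inequality at $y_0$. The specific choice $\tau_0 = 2/L_0$ is engineered precisely so that the base case becomes tight.

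First I would substitute the definitions. Using $g_0 = L_0(x_0 - y_0)$ and $\tau_0 = 2/L_0$, the definition $z_1 = x_0 - \tau_0 g_0$ simplifies to the reflection $z_1 = 2y_0 - x_0$. Plugging this into \eqref{eq:oppa_induction} gives
$$H_0 = \frac{2}{L_0}\bigl(f_\star - f_0\bigr) + \frac{1}{2}\|x_0 - y_\star\|^2 - \frac{1}{2}\|2y_0 - x_0 - y_\star\|^2.$$
Next I would collapse the difference of squared norms via the polarization identity $\tfrac{1}{2}\|a\|^2 - \tfrac{1}{2}\|b\|^2 = \langle \tfrac{1}{2}(a+b),\, a-b\rangle$. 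Taking $a = x_0 - y_\star$ and $b = 2y_0 - x_0 - y_\star$, one computes $a+b = 2(y_0 - y_\star)$ and $a - b = 2(x_0 - y_0) = (2/L_0)\, g_0$, so the difference equals $(2/L_0)\langle g_0,\, y_0 - y_\star\rangle$. Substituting yields
$$H_0 = \frac{2}{L_0}\bigl(f_\star - f_0 - \langle g_0, y_\star - y_0\rangle\bigr) = \frac{2}{L_0}\, Q_{\star,0},$$
which is nonnegative by the subgradient inequality at $y_0$, i.e., the $(i,j) = (\star, 0)$ case of \eqref{eq:prox_nonnegative}.

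There is no real obstacle here: the lemma is the initialization of the inductive analysis of OPPA, and the parameter choice $\tau_0 = 2/L_0$ is exactly what makes the reflection $z_1 = 2y_0 - x_0$ complete the square so that the three terms collapse into a single convexity inequality. The only care needed is bookkeeping through the identifications $g_0 = L_0(x_0-y_0)$ and $z_1 = 2y_0 - x_0$, after which the result is immediate.
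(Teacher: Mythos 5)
Your proof is correct and essentially identical to the paper's: both substitute $z_1 = 2y_0 - x_0$ (equivalently $y_0 - g_0/L_0$), collapse the difference of squared norms into $\tfrac{2}{L_0}\langle g_0, y_0 - y_\star\rangle$, and identify the result as $\tfrac{2}{L_0}Q_{\star,0}\geq 0$. Your explicit invocation of the polarization identity is just a named version of the direct expansion the paper performs.
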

\begin{proof}
By definition,
$x_0 = y_0 + g_0 / L_0$, $z_1 = x_0 - 2g_0/L_0 = y_0 - g_0/L_0$, and $\tau_0 = \frac{2}{L_0}$.
Thus,
\begin{align*}
    H_0 &=\frac{2}{L_0}(f_\star - f_0)
    - \frac{1}{2}\left\|
    y_0 - x_\star -  g_0/L_0\right\|^2
    + 
    \frac{1}{2}\|y_0 - x_\star + g_0/L_0\|^2
    \\
    &= \frac{2}{L_0}(f_\star - f_0) + \frac{2}{L_0}\langle 
    g_0, y_0 - x_\star
    \rangle = \frac{2}{L_0} Q_{\star,0} \geq 0,
\end{align*}
where the last inequality follows from \eqref{eq:prox_nonnegative}.
\end{proof}

\begin{lemma}
\label{lem:OPPA_induction}
Suppose $f$ is closed convex proper with minimizer $x_\star = y_\star\in\R^d$, $m\in[0,n-1]$, $\tau'\in\R$, and $z'\in\R^d$ satisfy
\begin{equation*}
    H' \coloneqq \tau'(f_\star - f_m)  - \frac{1}{2}\|z' - y_\star\|^2+ \frac{1}{2}\|x_0 - y_\star\|^2\geq 0.
\end{equation*}
Define
\begin{gather*}
    \tau_n  = \tau' + \frac{1}{L_n}(1 + \sqrt{1+2L_n\tau'}),\quad
    x_n = \frac{\tau'}{\tau_n} y_m + \frac{\tau_n -\tau'}{\tau_n} z',\quad\text{and}\quad
    z_{n+1} = z' - (\tau_n-\tau') g_n.
\end{gather*}
Then, $H_n\geq 0$.
\end{lemma}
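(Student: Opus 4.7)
The plan is to show $H_n \geq H'$, which combined with the assumption $H' \geq 0$ immediately yields $H_n \geq 0$. I will do this by expressing $H_n - H'$ as an explicit nonnegative combination of the two interpolation inequalities $Q_{\star,n}\geq 0$ and $Q_{m,n}\geq 0$ from \eqref{eq:prox_nonnegative}, with coefficients $\alpha \coloneqq \tau_n - \tau'$ and $\beta \coloneqq \tau'$, after using the proximal identity $g_n = L_n(x_n - y_n)$ to handle the cross terms. This is a classical PEP-style dual certificate, and it is really the single inductive workhorse of the OPPA analysis (and, as the subsequent sections indicate, of SPPPA as well).

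First, I would substitute $z_{n+1} = z' - (\tau_n - \tau')g_n$ into the definition of $H_n$ and expand the squared norm. This decomposes $H_n - H'$ into a functional piece $\tau_n(f_\star - f_n) + \tau'(f_m - f_\star)$, a linear piece $(\tau_n - \tau')\langle g_n,\, z' - y_\star\rangle$, and a quadratic piece $-\tfrac{1}{2}(\tau_n - \tau')^2 \|g_n\|^2$. Using the elementary rearrangement $\tau_n(f_\star - f_n) + \tau'(f_m - f_\star) = \alpha Q_{\star,n} + \beta Q_{m,n} + \alpha \langle g_n, y_\star - y_n\rangle + \beta\langle g_n, y_m - y_n\rangle$, all of the inner products against $g_n$ then collect into a single term $\langle g_n,\, \alpha z' + \beta y_m - \tau_n y_n\rangle$.

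The pivotal algebraic step is that the convex combination $x_n = (\tau'/\tau_n)\, y_m + ((\tau_n - \tau')/\tau_n)\, z'$ gives $\alpha z' + \beta y_m = \tau_n x_n$, so this inner product equals $\tau_n \langle g_n, x_n - y_n\rangle = (\tau_n/L_n)\|g_n\|^2$ by the proximal identity. All that is then left is the numerical identity $\tau_n / L_n = \tfrac{1}{2}(\tau_n - \tau')^2$, which is exactly what the quadratic recurrence $\tau_n = \tau' + (1 + \sqrt{1 + 2L_n\tau'})/L_n$ encodes: setting $u = \sqrt{1+2L_n\tau'}$, both sides equal $(1 + u + L_n\tau')/L_n^2$. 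After this cancellation, $H_n - H' = \alpha Q_{\star,n} + \beta Q_{m,n}$, which is nonnegative since $\tau' \geq 0$ (easily confirmed inductively from $\tau_0 = 2/L_0 > 0$ and the recurrence) forces $\alpha, \beta \geq 0$. The only real obstacle is the bookkeeping that makes the bracket $\alpha z' + \beta y_m - \tau_n y_n$ line up with $\tau_n(x_n - y_n)$; once one anticipates this collapse, the choice of coefficients $\alpha = \tau_n - \tau'$ and $\beta = \tau'$ and the quadratic update for $\tau_n$ are essentially uniquely forced.
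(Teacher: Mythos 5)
Your proof is correct and takes exactly the same route as the paper: the paper's proof consists of the single sentence asserting the identity $H_n = H' + (\tau_n - \tau')Q_{\star,n} + \tau' Q_{m,n}$ and leaving the verification to the reader, and your argument is precisely that verification (expand $\|z_{n+1}-y_\star\|^2$, collect the $g_n$ inner products via $\tau_n x_n = \tau' y_m + (\tau_n - \tau')z'$, apply $g_n = L_n(x_n - y_n)$, and use that the stepsize recurrence enforces $\tau_n / L_n = \tfrac{1}{2}(\tau_n - \tau')^2$). Your added remark that $\tau' \ge 0$ is needed for nonnegativity of the combination is correct and is left implicit in the paper.
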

\begin{proof}
It suffices to check that $H_n = H' + (\tau_n - \tau')Q_{\star,n} + \tau' Q_{m,n}$, which is nonnegative as it is the sum of three nonnegative terms.
\end{proof}

Applying the above lemma with $m = n-1$, $\tau' = \tau_{n-1}$ and $z' = z_n$, we deduce that $H_n\geq 0$ for all $n\geq 0$. This leads us to OPPA's convergence guarantee:
\begin{equation}
    \label{eq:oppa_rate}
    \tau_N(f_N - f_\star) \leq \tau_N(f_N - f_\star) + \frac{1}{2}\|z_{N+1}-x_\star\|^2 \leq \frac{1}{2}\|x_0 - x_\star\|^2.
\end{equation}

To make this convergence guarantee concrete, if $L_n=L$ is constant, then $\tau_N=N^2/2L + o(N^2)$. Thus, the bound becomes $f(y_N) - f(x_\star) \leq L\|x_0-x_\star\|^2/N^2$ up to lower order terms. It has long been known that this $O(1/N^2)$ is the minimax optimal order of convergence~\cite{nemirovski1991optimality,nemirovsky1992information}. Many works since~\cite{nesterov2003Introductory,drori2017LowerBoundOGM,carmon2020stationary1,drori2020complexity,carmon2021stationary2,dragomir2021optimal,drori2022LowerBoundITEM} have built lower bounding theory for first-order methods. Recently,~\cite{OPTIsta} extended the zero-chain framework of~\cite{drori2022LowerBoundITEM} to provide the first exactly matching lower bound for OPPA, proving its minimax optimality.

\subsection{Design of SPPPA} \label{subsec:SPPPA-design}

SPPPA is presented in Algorithm~\ref{alg:SPPPA}.
Similar to OPPA, SPPPA will maintain iterate sequences $x_n,y_n,z_n$ (and a scalar sequence $\tau_n$) so that the expression $H_n$ (defined in \eqref{eq:oppa_induction}) is nonnegative for all $n\geq 0$.
We will adopt OPPA's initialization so that $H_0\geq 0$ (see Lemma~\ref{lem:OPPA-base-case}).
The key conceptual difference between OPPA and SPPPA is that SPPPA will, in each iteration $n$, 
apply Lemma~\ref{lem:OPPA_induction} to a choice of $m\in[0,n-1]$, $\tau'$, and $z'$ determined by the observed first-order responses. This contrasts with OPPA, where $m,\tau',z'$ are always set to $n-1$, $\tau_{n-1}$, and $z_{n}$ respectively.

\subsubsection{The Planning Subproblem} \label{subsubsec:SPPPA-feasibility}

We now develop a tractable subproblem, the \emph{planning subproblem}, for optimizing $m,\tau',z'$.

Throughout this subsection, we will assume SPPPA is at iteration $n$ and has observed/constructed
$\{(y_i,f_i,g_i,z_{i+1},\tau_i)\}_{i=0}^{n-1}$ with $f_i=f(y_i)$, $g_i\in \partial f(y_i)$. Furthermore, we may assume by induction that $H_i\geq 0$ for all $0\leq i\leq n-1$.

Fix an arbitrary $m\in\argmin_{i\in[0,n-1]}f_i$ and define the following matrices and vectors
\begin{gather}
    Z=\bigl[z_{1}-x_0\ \ \cdots\ \ z_{n}-x_0\bigr]\in\R^{d\times n},\quad
    G=\bigl[g_0\ \ \cdots\ \ g_{n-1}\bigr]\in\R^{d\times n}, \label{eq:core-SPPPA-quantities}\\
    \tau=(\tau_0,\dots,\tau_{n-1})^\top,\quad
    f=(f_0,\dots,f_{n-1})^\top, \nonumber
\end{gather}
and as additional helpful quantities
\begin{equation}
    q=(q_0,\dots,q_{n-1})^\top,\quad a=(a_0,\dots,a_{n-1})^\top,\quad b=(b_0,\dots,b_{n-1})^\top \label{eq:helper-SPPPA-quantities}
\end{equation}
where $q_i:=f_i-\langle g_i,y_i-x_0\rangle$, $a_i:=\tfrac12\|z_{i+1}-x_0\|^2+\tau_i(f_i-f_m)$, and $b_i:=q_i-f_m$.
Our goal is to find $\tau'$ and $z'$ so that $H'\geq 0$. We will attempt to certify the nonnegativity of $H'$ by writing it as
\begin{equation}\label{eq:SPPPA-decomposition}
    H' = \sum_{i=0}^{n-1}\mu_i H_i + \sum_{i=0}^{n-1}\lambda_{\star,i}Q_{\star,i} + \varepsilon
\end{equation}
for some nonnegative $\mu,\lambda_\star\in\R^n$ and $\varepsilon \in\R$.
\begin{lemma}\label{lem:SPPPA-feasible-hypothesis}
    For any vectors $\mu,\lambda_\star \in\mathbb{R}^n$, the identity~\eqref{eq:SPPPA-decomposition} holds if
    \begin{gather*}
    \tau'=\langle\tau,\mu\rangle+\langle\mathbf 1,\lambda_\star\rangle,\qquad  
    z'=x_0+Z\mu-G\lambda_\star, \\
    \varepsilon = \langle \mu,a\rangle+\langle \lambda_\star,b\rangle - \tfrac{1}{2}\|Z\mu - G\lambda_\star\|^2.
    \end{gather*}
    Hence, if $\mu,\lambda_\star \geq 0$ and $\varepsilon\geq 0$, then $H'\geq 0$.
\end{lemma}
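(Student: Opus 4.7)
This is an algebraic identity followed by a short nonnegativity argument. My plan is to verify \eqref{eq:SPPPA-decomposition} directly by expanding both sides from the definitions of $H'$, $H_i$, and $Q_{\star,i}$ and grouping terms by type: those linear in $f_\star$, those linear in the remaining objective values, and the geometric terms involving $x_0$, $y_\star$, $z'$, and the $z_{i+1}$'s.

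The coefficient of $f_\star$ matches on both sides precisely because $\tau' = \langle \tau,\mu\rangle + \langle \mathbf 1,\lambda_\star\rangle$. Using the same identity to rewrite $-\tau' f_m$ as $-\sum_i \mu_i \tau_i f_m - \sum_i \lambda_{\star,i} f_m$, the remaining objective-value dependence on the left-hand side collapses to $\sum_i \mu_i \tau_i(f_i - f_m) + \sum_i \lambda_{\star,i}(f_i - f_m)$. The definitional identities
\[
\tau_i(f_i - f_m) = a_i - \tfrac{1}{2}\|z_{i+1}-x_0\|^2
\quad\text{and}\quad
f_i - f_m = b_i + \langle g_i, y_i - x_0\rangle
\]
then convert this into $\langle \mu, a\rangle + \langle \lambda_\star, b\rangle$, plus some quadratic and inner-product leftovers that must be absorbed into the geometric terms.

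For the geometric terms, I would expand each $\|v - y_\star\|^2$ about $x_0$ as $\|v - x_0\|^2 - 2\langle v - x_0, y_\star - x_0\rangle + \|y_\star - x_0\|^2$. The four $\|y_\star - x_0\|^2$ contributions cancel pairwise regardless of $\mu$; the $\sum_i \tfrac{\mu_i}{2}\|z_{i+1}-x_0\|^2$ from the $H_i$ expansion cancels the matching leftover from the $a_i$ substitution; and the inner products against $y_\star - x_0$ telescope using the defining relation $z' - x_0 = Z\mu - G\lambda_\star$, producing exactly $-\langle G\lambda_\star, y_\star - x_0\rangle$, which in turn cancels the combined contribution $\sum_i \lambda_{\star,i}\langle g_i, y_\star - y_i\rangle + \sum_i \lambda_{\star,i}\langle g_i, y_i - x_0\rangle = \langle G\lambda_\star, y_\star - x_0\rangle$ arising from $Q_{\star,i}$ and the $b_i$ substitution. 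The only surviving geometric term is $-\tfrac{1}{2}\|z' - x_0\|^2 = -\tfrac{1}{2}\|Z\mu - G\lambda_\star\|^2$, completing the formula for $\varepsilon$.

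For the final ``hence'' clause, once \eqref{eq:SPPPA-decomposition} is established, nonnegativity of $H'$ is immediate: $H_i \geq 0$ for each $i < n$ by the inductive hypothesis maintained by SPPPA, $Q_{\star,i} \geq 0$ by \eqref{eq:prox_nonnegative}, and $\mu,\lambda_\star \geq 0$ and $\varepsilon \geq 0$ by assumption, so every term of the decomposition is nonnegative. The only real challenge in this proof is bookkeeping in the geometric cancellations; there is no structural obstacle.
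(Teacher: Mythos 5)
Your proposal is correct and follows essentially the same approach as the paper: both verify the identity~\eqref{eq:SPPPA-decomposition} by expanding $H_i$, $Q_{\star,i}$, $H'$, and $\varepsilon$ from their definitions, completing the square about $x_0$, and carrying out the term-by-term cancellation you describe, then deduce $H'\ge 0$ from nonnegativity of each summand. The only difference is organizational (the paper first simplifies $\sum_i\mu_iH_i$ and $\sum_i\lambda_{\star,i}Q_{\star,i}$ separately before combining, whereas you group by term type); the underlying algebra is identical.
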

\begin{proof}
    Verifying this identity simply corresponds to expanding the right-hand side and collecting like terms. For completeness, we present this verification below:

    Plugging in the definition $H_i=\tau_i(f_\star-f_i)+\tfrac{1}{2}\|x_0-y_\star\|^2-\tfrac{1}{2}\|z_{i+1}-y_\star\|^2$ and noting $\sum_i\mu_i(x_0-z_{i+1})=-Z\mu$, the first term in our claimed decomposition equals
    \begin{align*}
    \sum_i \mu_i H_i
    &=\langle\tau,\mu\rangle f_\star-\sum_i \mu_i\tau_i f_i
    -\tfrac{1}{2}\sum_i \mu_i\|z_{i+1}-x_0\|^2
    +\big\langle y_\star-x_0,\ Z\mu\big\rangle
    \\
    &= \langle\tau,\mu\rangle (f_\star-f_m) - \langle \mu, a\rangle
    +\big\langle y_\star-x_0,\ Z\mu\big\rangle.
    \end{align*}
    Using $q_i=f_i-\langle g_i,y_i-x_0\rangle$ and so $Q_{\star,i}=f_\star-f_i-\langle g_i,y_\star-y_i\rangle=f_\star-q_i-\langle g_i,y_\star-x_0\rangle,$ the second term in our claimed decomposition equals
    \begin{align*}
    \sum_i \lambda_{\star,i} Q_{\star,i}
    &=\langle\mathbf 1,\lambda_\star\rangle f_\star
    -\langle \lambda_\star,q\rangle
    -\big\langle y_\star-x_0,\ G\lambda_\star\big\rangle\\
    &=\langle\mathbf 1,\lambda_\star\rangle (f_\star-f_m)
    -\langle \lambda_\star,b\rangle
    -\big\langle y_\star-x_0,\ G\lambda_\star\big\rangle.
    \end{align*}
    Summing these two expressions with $\varepsilon = \langle \mu,a\rangle+\langle \lambda_\star,b\rangle-\tfrac{1}{2}\|Z\mu-G\lambda_\star\|^2$ gives
    \begin{align*}
        &\sum_{i=0}^{n-1}\mu_i H_i + \sum_{i=0}^{n-1}\lambda_{\star,i} Q_{\star,i} + \varepsilon\\
        &=(\langle\tau,\mu\rangle+\langle\mathbf 1,\lambda_\star\rangle)(f_\star - f_m) + \big\langle y_\star-x_0,\ Z\mu - G\lambda_\star\big\rangle -\tfrac{1}{2}\|Z\mu-G\lambda_\star\|^2\\
        &=(\langle\tau,\mu\rangle+\langle\mathbf 1,\lambda_\star\rangle)(f_\star - f_m) +\tfrac{1}{2}\|x_0-y_\star\|^2-\tfrac{1}{2}\|x_0 + Z\mu - G\lambda_\star -y_\star\|^2.
    \end{align*}
    Plugging in the chosen values of $\tau', z'$ from the lemma statement, this is exactly $H'$. The final conclusion that $H'$ must be nonnegative whenever $\mu,\lambda_\star \geq 0$ and $\varepsilon\geq 0$ follows from the fact that this decomposition shows $H'$ is then equal to a sum of nonnegative quantities.
\end{proof}

The objective function in the planning subproblem is to maximize the value of $\tau'$. Hence, the planning subproblem can be written as
\begin{align}
\label{eq:SPPPA-solve}
\tau' = \sup_{\mu,\lambda_\star\in\mathbb{R}^n_{\ge 0}}\left\{\langle \tau,\mu \rangle + \langle \mathbf{1},\lambda_\star\rangle:\ \tfrac{1}{2}\|Z\mu - G\lambda_\star\|^2\le \langle \mu,a\rangle + \langle \lambda_\star, b\rangle\right\}.
\end{align}
This is a simple convex optimization problem, independent of the ambient dimension $d$, optimizing a linear function over a feasible region given by nonnegativity and a single rotated second-order cone constraint.
Although the optimizer of \eqref{eq:SPPPA-solve} lacks a closed-form in general, linear optimization over a single convex quadratic constraint (and nonnegativity) is quite standard and can be done using industrial solvers.

Note that $\mu=(0,\dots,0,1), \lambda_\star=(0,\dots,0)$
is  always a feasible solution in \eqref{eq:SPPPA-solve}, as the constraint becomes
\begin{equation*}
    \frac{1}{2}\| z_n - x_0\|^2 \leq \frac{1}{2}\|z_n - x_0\|^2 + \tau_{n-1}(f_{n-1} - f_m),
\end{equation*}
which holds by the assumption that $m\in\argmin_{i\in[0,n-1]}f_i$.
We deduce that $\tau' \geq \tau_{n-1}$.

On the other hand, if \eqref{eq:SPPPA-solve} has unbounded optimal value, then we claim $y_m\in\argmin_x f(x)$. Indeed, for any feasible $\mu,\lambda_\star$ in \eqref{eq:SPPPA-solve}, it holds that $H'\geq 0$. Thus, by rearranging, we have that
\begin{equation*}
    f(y_m) - f_\star \leq \frac{1}{2(\langle \tau,\mu\rangle + \langle \mathbf{1}, \lambda_\star\rangle)}\| x_0 - x_\star\|^2.
\end{equation*}

\subsubsection{Subgame Perfect Proximal Point Algorithm and its Guarantees} \label{subsubsec:SPPPA-optimization}

We now formally state SPPPA:
\begin{algorithm}[H]
    \caption{The Subgame Perfect Proximal Point Algorithm (SPPPA)}
    \label{alg:SPPPA}
    Given closed convex proper function $f$, initial iterate $x_0$, proximal parameter sequence $L_0,L_1,\dots$
    \begin{itemize}
        \item Define $\tau_0 = \frac{2}{L_0}$ and $z_1 = x_0 - \tau_0 g_0$
        \item For $n = 1,2,\dots$.
        \begin{itemize}
            \item Query $y_{n-1}=\mathrm{prox}_{L_{n-1},f}(x_{n-1})$ and $f_{n-1} = f(y_{n-1})$. Set $g_{n-1} = L_{n-1}(x_{n-1} - y_{n-1})$.
            \item Let $m \in \argmin_{i\in\{0,\dots,n-1\}}\left\{f(y_i)\right\}$.
            \item If \eqref{eq:SPPPA-solve} is unbounded, terminate and output $y_m$. Else, let $(\mu,\lambda_\star)$ be an optimal solution to \eqref{eq:SPPPA-solve} and set $\tau' = \langle\tau,\mu\rangle+\langle \mathbf{1},\lambda_{\star}\rangle$ and $z' = x_0 + Z\mu - G\lambda_\star$.
            \item Define
            \begin{gather*}
                \tau_n = \tau' + \frac{1}{L_n}(1 + \sqrt{1+2L_n\tau'})\\
                x_{n} = \frac{\tau'}{\tau_n} y_{m} + \frac{\tau_{n}-\tau'}{\tau_n} z'\\
                z_{n+1} = z' - (\tau_{n}-\tau') g_n.
            \end{gather*}
        \end{itemize}
    \end{itemize}
    \end{algorithm}
By 
Lemmas~\ref{lem:OPPA-base-case},~\ref{lem:OPPA_induction} and~\ref{lem:SPPPA-feasible-hypothesis}, this method maintains $H_n\geq 0$ at every iteration and hence provides a guarantee in every iteration
$$ \frac{f(y_n)-f(y_\star)}{\tfrac{1}{2}\|x_0-x_\star\|^2} \leq \frac{1}{\tau_n}.$$

In order to argue that SPPPA is subgame perfect, we will additionally need dynamic guarantees on $f(y_N) - f(y_\star)$ that can be made at earlier iterations $0\leq n\leq N$.
We will state these guarantees in terms of a doubly-indexed expression $\tau_{i,j}$.
First, define $\tau_{0,0},\dots,\tau_{0,N}$ to be the sequence generated by the OPPA recurrence: $\tau_{0,0} = 2/L_0$ and for all $i\in[1,N]$,
\begin{equation*}
    \tau_{0,i} = \tau_{0,i-1} + \frac{1}{L_{i}}(1 + \sqrt{1+2L_{i}\tau_{0,i-1}}).
\end{equation*}
Next, for every fixed $n\in[1,N]$, let $\tilde\tau$ denote the optimal value of \eqref{eq:SPPPA-solve} in iteration $n$, and define
\begin{equation}\label{eq:tau-recurrence}
    \begin{cases}
    \tau_{n,n-1}=\tau'\\
    \tau_{n,i} = \tau_{n,i-1} + \frac{1}{L_{i}}(1 + \sqrt{1+2L_{i}\tau_{n,i-1}}).
\end{cases}
\end{equation}
Note that the $\tau_n$ sequence maintained in SPPPA coincides with the sequence $\tau_{n,n}$.

\begin{theorem}\label{thm:SPPPA-rate}
    For any closed convex proper $f$ and $x_0\in\mathbb{R}^d$ and fixed sequence of proximal parameters $L_0,\dots,L_{N-1}$, SPPPA guarantees
    $$\frac{f(y_N)-f_\star}{\tfrac{1}{2}\|x_0-x_\star\|^2} \leq \Psi_N \leq \dots \leq \Psi_0,$$
    where $\Psi_n = 1/\tau_{n,N}$ is the guarantee of SPPPA based on the first-order responses seen up to iteration $n$ and $\Psi_0$ is the minimax optimal guarantee ensured by OPPA.
\end{theorem}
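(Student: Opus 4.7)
The plan is to prove Theorem~3.6 in three steps: (i) the terminal rate $\frac{f(y_N)-f_\star}{\frac{1}{2}\|x_0-x_\star\|^2}\leq \Psi_N$, (ii) the monotonicity $\Psi_N\leq \dots \leq \Psi_0$, and (iii) matching $\Psi_0$ to OPPA's guarantee. For (i), I would induct on $n$ using Lemmas~3.4 and~3.5 to show that $H_n\geq 0$ along the SPPPA trajectory: Lemma~3.4 handles the base case, and the hypothesis $H'\geq 0$ required by Lemma~3.5 in the inductive step is supplied by Lemma~3.6, since the pair $(\mu,\lambda_\star)$ SPPPA extracts from \eqref{eq:SPPPA-solve} is nonnegative and the rotated-cone constraint of \eqref{eq:SPPPA-solve} is exactly the nonnegativity of the residual $\varepsilon$ appearing in Lemma~3.6. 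Specializing to $n=N$ and rearranging yields $\tau_N(f(y_N)-f_\star)\leq \tfrac{1}{2}\|x_0-x_\star\|^2$, which is the desired bound since $\Psi_N = 1/\tau_{N,N} = 1/\tau_N$.

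For (ii), the key observation is that at each SPPPA iteration $n$, the simple vector $\mu$ equal to the standard basis vector selecting index $n-1$, paired with $\lambda_\star=0$, is always feasible in \eqref{eq:SPPPA-solve}: the constraint reduces to $\tfrac{1}{2}\|z_n-x_0\|^2 \leq \tfrac{1}{2}\|z_n-x_0\|^2 + \tau_{n-1}(f_{n-1}-f_m)$, which holds because $m\in\argmin_{i\leq n-1}f_i$ forces $f_{n-1}-f_m\geq 0$. The objective value at this feasible point is $\tau_{n-1}=\tau_{n-1,n-1}$, so the optimum satisfies $\tau_{n,n-1}\geq \tau_{n-1,n-1}$. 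Since the OPPA update $\tau\mapsto \tau+\tfrac{1}{L}(1+\sqrt{1+2L\tau})$ appearing in \eqref{eq:tau-recurrence} is monotonically increasing in $\tau$, applying the same $N-n+1$ iterates of this recurrence (with the same proximal parameters $L_n,\dots,L_N$) to both starting points preserves the inequality. This yields $\tau_{n,N}\geq \tau_{n-1,N}$, and hence $\Psi_n\leq \Psi_{n-1}$. For (iii), the definition of $\tau_{0,i}$ reproduces OPPA's scalar recurrence verbatim, so $\Psi_0 = 1/\tau_{0,N}$ matches the guarantee \eqref{eq:oppa_rate} of OPPA, which was proved minimax optimal in~\cite{OPTIsta}.

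No single step is technically delicate. The conceptual heart of the argument is simply recognizing that SPPPA's planning subproblem always admits the ``no-planning'' OPPA step as a feasible point, so the optimized $\tau'$ can only improve upon what OPPA alone would have produced; the rest is bookkeeping and monotonicity of the one-dimensional $\tau$-recurrence.
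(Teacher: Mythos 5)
Your proof is correct and follows essentially the same route as the paper's: the terminal rate comes from the $H_n\ge 0$ invariant (base case Lemma~\ref{lem:OPPA-base-case}, inductive step Lemma~\ref{lem:OPPA_induction} supplied with a feasible $H'\ge 0$ certificate via Lemma~\ref{lem:SPPPA-feasible-hypothesis}), and the monotonicity $\Psi_N\le\dots\le\Psi_0$ follows because the OPPA step $(\mu,\lambda_\star)=(e_{n-1},0)$ is always feasible in \eqref{eq:SPPPA-solve}, giving $\tau_{n,n-1}\ge\tau_{n-1,n-1}$, after which the monotone increase of the one-dimensional map $\tau\mapsto\tau+\tfrac{1}{L}(1+\sqrt{1+2L\tau})$ propagates the inequality forward to $\tau_{n,N}\ge\tau_{n-1,N}$. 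The paper's written proof of the theorem only formalizes the monotone-chaining step (ii), since (i) is established in the discussion immediately preceding the theorem; your write-up makes that implicit part explicit, which is fine.
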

\begin{proof}
Recall that at iteration $n$, the optimal value of $\tau'$ in \eqref{eq:SPPPA-solve} is at least $\tau_{n-1}$. This is equivalent to saying that $\tau_{n,n-1} \geq \tau_{n-1,n-1}$ for all $n=1,\dots,N$.

Now, observe that for any $L>0$, the expression
    $\tau + \frac{1}{L}(1+\sqrt{1+2L\tau})$
is an increasing function of $\tau$. We deduce that for any $n\in[1,N]$, 
\begin{align*}
    \tau_{n,n} &= \tau_{n,n-1} + \frac{1}{L_n}(1 + \sqrt{1+2L_n\tau_{n,n-1}})\\
    &\geq \tau_{n-1,n-1} + \frac{1}{L_n}(1 + \sqrt{1+2L_n\tau_{n-1,n-1}})\\
    &= \tau_{n-1,n}.
\end{align*}
We can chain this argument to get:
\begin{align*}
    \tau_{n,n} &\geq \tau_{n-1,n-1} + \frac{1}{L_n}(1 + \sqrt{1+2L_n\tau_{n-1,n-1}})\\
    &\geq \tau_{n-2,n-1} + \frac{1}{L_n}(1 + \sqrt{1+2L_n\tau_{n-2,n-1}})\\
    &= \tau_{n-2,n}.
\end{align*}
Repeating this argument shows that
$\tau_{0,N}\leq \tau_{1,N}\leq \dots\leq \tau_{N,N}$,
thereby completing the argument.
\end{proof}

As a pragmatic note, any feasible solution to the problem~\eqref{eq:SPPPA-solve} suffices to ensure that the induction $H_n \geq 0$. OPPA corresponds to one specific (often suboptimal) feasible solution.
As a practical consequence of this freedom to select suboptimal $(\mu,\lambda_\star)$, one may easily design a limited-memory variant of SPPPA where only memory of the last $k$ data are stored $\{(y_i, f_i, g_i, z_{i+1}, \tau_i)\}_{i=n-k}^{n-1}$. In such a setting, the convex optimization problem~\eqref{eq:SPPPA-solve} is then of fixed dimension $2k$, incurring only a constant per-iteration cost to the algorithm. In the setting of smooth convex optimization, a limited memory subgame perfect method has already been preliminarily, experimentally explored~\cite{SPGM}. An adaptive, parameter-free subgame perfect method was developed in~\cite{ASPGM}, showing even stronger practical performance on a wider numerical sample.
 \section{Subgame Perfection of SPPPA} \label{sec:SPPPA-proof}
This section constructs, for any given history of first-order information revealed before the $n$th iteration, a worst–case convex function for which no method of the form~\eqref{eq:general-form-prox-methods} can outperform the SPPPA guarantee. Our argument leverages the zero-chain construction ideas of~\cite{drori2022LowerBoundITEM} that were extended to proximal settings by~\cite{OPTIsta}. Our process for dynamically constructing lower bounds parallels that of the smooth convex setting~\cite{SPGM}.

\subsection{Dynamic Construction of Candidate Hard Problem Instance}
\label{subsec:spppa_lowerbound}

Fix an $n\in[1,N]$ in this section. 
Let
$\cH=\{(x_i,f_i,g_i,y_i,\tau_i,z_{i+1})\}_{i=0}^{n-1}$
be the iterates and responses produced/observed by SPPPA with proximal parameters $(L_i)_{i\ge 0}$, where $y_i=\mathrm{prox}_{L_i,f}(x_i)$ and $g_i=L_i(x_i-y_i)\in\partial f(y_i)$, and $f_i=f(y_i)$. Note that by construction, for all $i,j\in\{0,\dots,n-1\}$,
\[
Q_{i,j}=f(y_i)-f(y_j)-\langle g_j,y_i-y_j\rangle\ge 0,
\]
and
\[
H_i=\tau_i(f(y_\star)-f(y_i))+\tfrac12\|x_0-y_\star\|^2-\tfrac12\|z_{i+1}-y_\star\|^2\ge 0.
\]

We will take our hard function $f$ to be a max-of-affine function
\begin{equation}
    \label{eq:hard_func_SPPPA}
    f_\cH(x) = \max_{i\in\cI_\star}\{f_i + \langle g_i, x - y_i\rangle\},
\end{equation}
where the tuples $(f_i, g_i, y_i)$ come from the given history of first-order responses for $i = 0,\dots,n-1$, and need to be constructed for $i=n,\dots,N,\star$.

\subsubsection{Dual of the Planning Subproblem}
As the guarantee of SPPPA depends on the optimal value of the planning subproblem, it is natural to use a dual optimal solution to construct lower bounds.
We continue using the notation defined in~\eqref{eq:core-SPPPA-quantities} and~\eqref{eq:helper-SPPPA-quantities}.
Let $(\mu,\lambda_\star)$ denote the maximizers of~\eqref{eq:SPPPA-solve} computed by SPPPA in its $n$th iteration, generating values $\tau'=\langle\tau,\mu\rangle+\langle \mathbf{1},\lambda_\star\rangle$ and $z'=x_0+Z\mu-G\lambda_\star$. We derive the dual to \eqref{eq:SPPPA-solve} below.

\begin{lemma}
\label{lem:dual_prox_full}
The dual of \eqref{eq:SPPPA-solve} is
\begin{equation}
\label{eq:phi_prox_dual_full}
\inf_{\xi>0,\ w\in\R^d}\left\{
\frac{1}{2\xi} \|w\|^2\ :\
\tau+\xi a - Z^\top w \le 0,\ \ 
\mathbf{1}+\xi b + G^\top w \le 0
\right\}.
\end{equation}
If the supremum in \eqref{eq:SPPPA-solve} is finite, it is attained and strong duality holds. For any primal/dual optimizers $(\mu,\lambda_\star)$ and $(\xi,w)$, 
\[
w=\xi(z'-x_0).
\]
\end{lemma}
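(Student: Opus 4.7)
The plan is to derive the dual via standard Lagrangian duality, introducing a single scalar multiplier $\xi\ge 0$ for the rotated second-order cone constraint in \eqref{eq:SPPPA-solve}, and then linearizing the resulting $\xi$-weighted quadratic term in $\mu,\lambda_\star$ using a Fenchel-type identity that introduces the auxiliary vector $w\in\R^d$. The key identity is
\[
-\tfrac{\xi}{2}\|v\|^2 = \inf_{w\in\R^d}\left[\tfrac{1}{2\xi}\|w\|^2 - \langle w,v\rangle\right]\qquad\text{for } \xi>0,
\]
with unique minimizer $w=\xi v$.

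Concretely, I would first form the Lagrangian
\[
L(\mu,\lambda_\star,\xi) = \langle \tau+\xi a,\mu\rangle + \langle \mathbf{1}+\xi b,\lambda_\star\rangle - \tfrac{\xi}{2}\|Z\mu - G\lambda_\star\|^2
\]
and, for $\xi>0$, apply the identity above with $v = Z\mu - G\lambda_\star$. Swapping the resulting infimum over $w$ with the supremum over $\mu,\lambda_\star\ge 0$ is justified by Sion's minimax theorem, since the integrand is linear in $(\mu,\lambda_\star)$ and strictly convex quadratic in $w$. The outer supremum then factors coordinatewise: it is $+\infty$ unless $\tau+\xi a - Z^\top w \le 0$ and $\mathbf{1}+\xi b + G^\top w \le 0$, in which case the saddle value reduces to $\tfrac{1}{2\xi}\|w\|^2$. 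The boundary case $\xi=0$ can be discarded because the inner supremum is then $+\infty$ (the coefficients $\tau,\mathbf{1}$ being strictly positive on $\mu,\lambda_\star\ge 0$), so taking the infimum over $\xi>0$ yields precisely \eqref{eq:phi_prox_dual_full}.

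For strong duality and dual attainment in the finite-value case, I would invoke Slater's condition: for $m\in\argmin_i f_i$, the candidate $\mu = t\, e_m$, $\lambda_\star = 0$ with small $t>0$ produces $\tfrac{t^2}{2}\|z_{m+1}-x_0\|^2$ on the left of the primal constraint versus $t\cdot\tfrac{1}{2}\|z_{m+1}-x_0\|^2$ on the right, strictly feasible when $z_{m+1}\ne x_0$ and $t<1$. The identification $w=\xi(z'-x_0)$ is then immediate from the Fenchel identity used above: its unique $w$-minimizer is $\xi(Z\mu-G\lambda_\star) = \xi(z'-x_0)$, which is equivalent to the stationarity-in-$w$ KKT condition for \eqref{eq:phi_prox_dual_full}.

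The main obstacle will be handling degenerate edge cases in Slater's condition --- in particular, showing strict feasibility (or finding an alternative attainment argument) when $z_{m+1}=x_0$ yet the primal value is still finite. This can likely be resolved by exhibiting a strictly feasible direction supported on a different index $i$ with $f_i>f_m$ contributing positively to $a_i$ and $b_i$, or by a direct limiting argument that recovers dual attainment from the closedness of the dual feasible set.
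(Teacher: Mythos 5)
Your derivation of the dual is essentially identical to the paper's: both form the Lagrangian with a single scalar multiplier $\xi\ge 0$ on the rotated second-order cone constraint, apply the Fenchel/perspective identity $-\tfrac{\xi}{2}\|v\|^2=\inf_w\{\tfrac{1}{2\xi}\|w\|^2-\langle w,v\rangle\}$ to introduce $w$, and then maximize over $\mu,\lambda_\star\ge 0$ to extract the linear dual constraints. The identification $w=\xi(Z\mu-G\lambda_\star)=\xi(z'-x_0)$ also falls out of the same Fenchel step in both.

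Where you diverge is the justification of strong duality and dual attainment. The paper does not invoke Slater at all; it argues that finiteness of the primal supremum rules out the recession directions that would force $\xi=0$, and then appeals to ``standard conic/Fenchel duality (and closedness of the feasible set)'' to get strong duality and attainment on both sides. You instead propose Slater's condition with the candidate $\mu=t\,e_m$, $\lambda_\star=0$, and you correctly flag that this candidate is not strictly feasible when $z_{m+1}=x_0$: both sides of the quadratic constraint then vanish identically along that ray. Your flagged gap is real, and your sketch of a fix (find another strictly feasible direction, e.g.\ an index with $f_i>f_m$ or $z_{i+1}\ne x_0$, or a limiting/closedness argument) is the right instinct. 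A slightly cleaner patch is to observe that $(\mu,\lambda_\star)=(t\mathbf 1,0)$ is strictly feasible for small $t>0$ unless $a\equiv 0$, i.e.\ unless $z_{i+1}=x_0$ and $f_i=f_m$ for every $i\le n-1$; and $z_1=x_0$ forces $g_0=0$, which makes $y_0$ an exact minimizer so the algorithm would already have terminated. So under the running (non-degenerate) assumption your Slater candidate can be repaired, and the conclusion matches the paper's. The net effect: same dual, same $w$-identification, equally valid but differently packaged attainment argument — yours more explicit but with an edge case you rightly noted and should close out; the paper's terser and resting on a general conic duality appeal.
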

\begin{proof}
Consider the Lagrangian with multiplier $\xi\ge 0$:
\[
\cL(\mu,\lambda_\star;\xi)
=\langle\tau,\mu\rangle+\langle \mathbf{1},\lambda_\star\rangle
+\xi\bigl(\langle\mu,a\rangle+\langle\lambda_\star,b\rangle
-\tfrac12\|Z\mu-G\lambda_\star\|^2\bigr),
\quad \mu,\lambda_\star\ge 0.
\]
Using the Fenchel identity
\(
-\tfrac{\xi}{2}\|t\|^2
=\inf_{w\in\R^d}\bigl\{\tfrac{1}{2\xi}\|w\|^2-\langle w,t\rangle\bigr\}
\)
(valid for $\xi\ge 0$), we get
\[
\cL(\mu,\lambda_\star;\xi)
=\inf_{w\in\R^d}
\Bigl\{
\tfrac{1}{2\xi}\|w\|^2
+\langle \tau+\xi a-Z^\top w,\mu\rangle
+\langle \mathbf{1}+\xi b+G^\top w,\lambda_\star\rangle
\Bigr\}.
\]
Maximizing over $\mu,\lambda_\star\ge 0$ yields finiteness if and only if
\[
\tau+\xi a-Z^\top w\le 0,\qquad
\mathbf{1}+\xi b+G^\top w\le 0,
\]
in which case the supremum over $(\mu,\lambda_\star)$ equals $0$. Then a short algebraic simplification gives the dual
\[
\inf_{\xi>0,\ w\in\R^d}
\left\{
\frac{1}{2\xi}\|w\|^2\ :\
\tau+\xi a - Z^\top w \le 0,\ \ 
\mathbf{1}+\xi b + G^\top w \le 0
\right\}.
\]
If the supremum in \eqref{eq:SPPPA-solve} is finite, the recession directions that would make it $+\infty$ are excluded, which forces $\xi>0$; standard conic/Fenchel duality (and closedness of the feasible set) then gives strong duality and attainment on both sides.

By the Fenchel step, at any primal/dual optimizers \((\mu,\lambda_\star)\) and \((\xi,w)\) we have
\[
w  =  \xi (Z\mu - G\lambda_\star) = \xi (z'-x_0).\qedhere
\]
\end{proof}

It will be instructive to view the dual variable 
$w=\xi(z'-x_0)$ as parameterized by $z'$ and $\xi = \frac{1}{f_m - f_\star}$ as parameterized by the variable $f_\star$. This quantity will coincide with the optimal value of the hard function that we will construct in this next section, justifying the notation $f_\star$.
The constraints in the dual problem can be rewritten as constraints on $f_\star$ and $z'$:
for every $i\in\{0,\dots,n-1\}$,
\begin{gather}
f_\star \ \ge\ f_i + \langle g_i,\ z' - y_i\rangle
,
\label{eq:first-dual-constraint-SPPPA}\\
\tau_i (f_\star - f_i) + \tfrac12\|x_0 - z'\|^2 - \tfrac12\|z_{i+1}-z'\|^2 \ \ge\ 0
. \label{eq:second-dual-constraint-SPPPA}
\end{gather}

\subsubsection{Construction of Our Dynamic Hard Instance}
It suffices to consider the case where \eqref{eq:SPPPA-solve} is bounded as otherwise SPPPA outputs an exact minimizer of $f$.
Let $(\mu,\lambda_\star)$ denote optimizers and let
$(w,\xi)$ be optimizers of the primal and dual problems.
By Lemma~\ref{lem:dual_prox_full}, $w=\xi(z'-x_0)$. Let $e_n,\dots,e_N$ be unit vectors orthogonal to $\mathrm{span}\{g_0,\dots,g_{n-1}\}$ and mutually orthogonal. It is possible to pick these vectors under the assumption $d\geq N+1$.

Then we define {\it future} iterates and first-order observations for $i\in\{n,\dots, N\}$ as follows (first at $i=n$, then inductively for $i>n$)
\begin{align*}
    \tau_n &= \tau' + \frac{1}{L_n}\left(1+\sqrt{1+2L_n\tau'}\right) & \tau_i &= \tau_{i-1} + \frac{1}{L_i}\left(1+\sqrt{1+2L_i\tau_{i-1}}\right)\\
    x_n &= \frac{\tau'}{\tau_n}y_{m}+\frac{\tau_{n}-\tau'}{\tau_n}z' & x_i &= \frac{\tau_{i-1}}{\tau_i}y_{i-1}+\frac{\tau_{i}-\tau_{i-1}}{\tau_i}z_i\\
    g_n &= \sqrt{\frac{f_m-f_\star}{\tau_n - \tau'}} e_n & g_i &= \sqrt{\frac{f_{i-1}-f_\star}{\tau_i - \tau_{i-1}}}e_i\\
    f_n &= f_{m} - \frac{1}{L_n}\|g_n\|^2 & f_i &= f_{i-1} - \frac{1}{L_i}\|g_i\|^2\\
    y_n &= x_n - \frac{1}{L_n}g_n & y_i &= x_i - \frac{1}{L_i}g_i\\
    z_{n+1} &= z'- (\tau_n - \tau') g_n & z_{i+1} &= z_{i}-(\tau_{i} - \tau_{i-1}) g_{i}.
\end{align*}
For $i=\star$, set
$f_\star = f_m - \frac{1}{\xi}$, 
$g_\star=0$ and $y_\star=z_{N+1}$. 

Note that with these definitions, $\tau_{n,i} = \tau_i$ for all $i=n,\dots,N$ so that $\Psi_n = \frac{1}{\tau_{n,N}} = \frac{1}{\tau_N}$.

\subsection{Properties of the Candidate Hard Instance}
\label{subsec:properties-SPPPA-lowerbound}
The following three lemmas record useful algebraic properties of our construction. Proofs of each of these results are given in the appendix for completeness.
\begin{lemma}\label{lem:inner_product_formulas}
	For $i\in\{n,\dots, N\}$, the following hold for each index $j<i$
	\begin{equation}\label{eq:innerprod}
		\langle g_j, y_i - y_j\rangle
		=\begin{cases} \langle g_j, z' - y_j\rangle + \frac{\tau'}{\tau_i}\,\langle g_j, y_m - z'\rangle & \text{if } j<n\\
			 -\frac{\tau_i - \tau_j}{\tau_i}\,\bigl(f_j - f_\star\bigr) &\text{if } j\geq n.
			\end{cases}
	\end{equation}
\end{lemma}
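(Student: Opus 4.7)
The plan is to induct on $i$ in both cases, leveraging a few orthogonality and linear-span observations about the construction. By the choice of $e_n,\dots,e_N$ as orthonormal vectors perpendicular to $\spann\{g_0,\dots,g_{n-1}\}$, we immediately have $\langle g_j,g_k\rangle=0$ whenever $j\neq k$ and $\max(j,k)\ge n$. Unrolling the SPPPA recurrence for $z$ during the historical iterations gives $z_k-x_0\in\spann\{g_0,\dots,g_{k-1}\}$ for $k\le n$, so since $z'=x_0+Z\mu-G\lambda_\star$, also $z'-x_0\in\spann\{g_0,\dots,g_{n-1}\}$; the analogous bookkeeping gives $y_m-x_0\in\spann\{g_0,\dots,g_m\}$. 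Finally, iterating the construction's $z$-recurrence gives $z_i-z'\in\spann\{g_n,\dots,g_{i-1}\}$ for $i>n$, which yields $\langle g_j, z_i\rangle=\langle g_j, z'\rangle$ for $j<n$, and analogously $\langle g_j, z_i\rangle=\langle g_j, z_{j+1}\rangle$ for $j\ge n$ and $i>j$.

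For the case $j<n$, I would handle the base case $i=n$ by plugging $y_n=\tfrac{\tau'}{\tau_n}y_m+\tfrac{\tau_n-\tau'}{\tau_n}z'-\tfrac{1}{L_n}g_n$ into $\langle g_j,y_n-y_j\rangle$, using $\langle g_j,g_n\rangle=0$ to drop the last term, and splitting $\langle g_j,y_m-y_j\rangle=\langle g_j,y_m-z'\rangle+\langle g_j,z'-y_j\rangle$. The inductive step substitutes $y_i=\tfrac{\tau_{i-1}}{\tau_i}y_{i-1}+\tfrac{\tau_i-\tau_{i-1}}{\tau_i}z_i-\tfrac{1}{L_i}g_i$, uses $\langle g_j,g_i\rangle=0$ and $\langle g_j,z_i\rangle=\langle g_j,z'\rangle$ to rewrite the right-hand side as a convex combination of the inductive formula and $\langle g_j,z'-y_j\rangle$, which telescopes to the claimed expression.

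For the case $j\ge n$, the crucial intermediate identity is $\langle g_j,z_i-y_j\rangle=-(f_j-f_\star)$ for every $i>j$. Adopting the uniform convention $\tau_{n-1}:=\tau'$, $f_{n-1}:=f_m$ so the $j=n$ special formulas blend with the $j>n$ generic ones, direct computation using $y_j=x_j-\tfrac{1}{L_j}g_j$ together with the orthogonality of $g_j$ to every other $g_k$ and to $y_m-x_0$ and $z'-x_0$ gives $\langle g_j,y_j\rangle=\langle g_j,x_0\rangle-\tfrac{1}{L_j}\|g_j\|^2$, and likewise $\langle g_j,z_i\rangle=\langle g_j,z_{j+1}\rangle=\langle g_j,x_0\rangle-(\tau_j-\tau_{j-1})\|g_j\|^2$. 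Differencing these and invoking the construction identities $\|g_j\|^2/L_j=f_{j-1}-f_j$ and $(\tau_j-\tau_{j-1})\|g_j\|^2=f_{j-1}-f_\star$ collapses to $-(f_j-f_\star)$.

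With that identity in hand, the remaining induction on $i\ge j$ is routine: the base case $i=j$ is trivially $0=0$, and the step substitutes the recurrence for $y_i$, uses $\langle g_j,g_i\rangle=0$ to drop the $g_i$-term, and combines $\tfrac{\tau_{i-1}}{\tau_i}\cdot\bigl(-\tfrac{\tau_{i-1}-\tau_j}{\tau_{i-1}}(f_j-f_\star)\bigr)$ with $\tfrac{\tau_i-\tau_{i-1}}{\tau_i}\cdot(-(f_j-f_\star))$ into $-\tfrac{\tau_i-\tau_j}{\tau_i}(f_j-f_\star)$. I expect the main obstacle to be the intermediate identity in the $j\ge n$ case: keeping the conventions $\tau_{n-1}:=\tau'$, $f_{n-1}:=f_m$ straight and ensuring that the construction's special definitions at $j=n$ blend seamlessly with the generic recurrence at $j>n$, so that the two-part lemma really admits a single unified induction rather than a case split on $j=n$ versus $j>n$.
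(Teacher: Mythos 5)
Your proof is correct and takes essentially the same route as the paper's: both compute $\langle g_j, z_i - y_j\rangle$ directly (including your intermediate identity $\langle g_j, z_i - y_j\rangle = -(f_j - f_\star)$ for $j\ge n$) and both unroll the recurrence $y_i = \tfrac{\tau_{i-1}}{\tau_i}y_{i-1} + \tfrac{\tau_i - \tau_{i-1}}{\tau_i}z_i - \tfrac{1}{L_i}g_i$ using orthogonality of the constructed $g_i$'s and the uniform convention $\tau_{n-1}\coloneqq\tau'$, $f_{n-1}\coloneqq f_m$. The paper organizes this by splitting $\langle g_j, y_i - y_j\rangle = \langle g_j, y_i - z_i\rangle + \langle g_j, z_i - y_j\rangle$ and unrolling the first summand into closed form, whereas you run a direct induction on $i$; these are bookkeeping variants of the same argument.
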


\begin{lemma}\label{lem:SPPPA-monotone}
	For $i\in\{n,\dots, N\}$, $\tau_i\bigl(f_i - f_\star\bigr)$ is nondecreasing and $\tau_n\bigl(f_n - f_\star\bigr) \ge \tau'(f_m - f_\star)$.
\end{lemma}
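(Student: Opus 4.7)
My plan is to prove both claims simultaneously via a single algebraic identity. The key substitution is $s_i \coloneqq \sqrt{1+2L_i \tau_{i-1}}$ for $i\in\{n+1,\dots,N\}$ and $s_n \coloneqq \sqrt{1+2L_n\tau'}$. Squaring yields $\tau_{i-1} = (s_i^2-1)/(2L_i)$ (with $\tau_{n-1}$ replaced by $\tau'$ at $i=n$), and the recurrence in \eqref{eq:tau-recurrence} gives $\tau_i - \tau_{i-1} = (1+s_i)/L_i$. Summing these two identities yields the pivotal simplification
\begin{equation*}
    \tau_i = \frac{(1+s_i)^2}{2L_i}.
\end{equation*}

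Next, from the construction in Section~\ref{subsec:spppa_lowerbound}, $\|g_i\|^2 = (f_{i-1}-f_\star)/(\tau_i-\tau_{i-1})$ and $f_i = f_{i-1}-\|g_i\|^2/L_i$, so
\begin{equation*}
    f_i - f_\star = (f_{i-1}-f_\star)\left(1 - \frac{1}{L_i(\tau_i-\tau_{i-1})}\right) = (f_{i-1}-f_\star)\cdot\frac{s_i}{1+s_i},
\end{equation*}
using $L_i(\tau_i-\tau_{i-1}) = 1+s_i$. The analogous formula holds at $i=n$ with $f_{n-1}$ replaced by $f_m$. Multiplying by the pivotal identity $\tau_i = (1+s_i)^2/(2L_i)$ gives
\begin{equation*}
    \tau_i(f_i-f_\star) = \frac{s_i(1+s_i)}{2L_i}(f_{i-1}-f_\star),
\end{equation*}
while $\tau_{i-1}(f_{i-1}-f_\star) = \frac{(s_i-1)(s_i+1)}{2L_i}(f_{i-1}-f_\star)$. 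Their ratio collapses to $s_i/(s_i-1)\ge 1$, since $s_i \geq 1$. This delivers the nondecreasing property for $i>n$, and the exact same calculation at $i=n$ (with $\tau_{n-1},f_{n-1}$ replaced by $\tau',f_m$) delivers the base-case inequality $\tau_n(f_n-f_\star)\ge \tau'(f_m-f_\star)$.

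The main obstacle is merely careful algebraic bookkeeping; there is no substantive difficulty. The only subtleties are (i) confirming $s_i > 1$, which follows from $\tau' \geq \tau_{n-1} \geq \tau_0 = 2/L_0 > 0$ and the fact that the recurrence preserves $\tau_{i-1} > 0$, and (ii) noting that $f_{i-1} > f_\star$ persists by induction because $s_i/(1+s_i) > 0$; the hypothesis $f_m > f_\star$ at the base of this induction follows from $f_\star = f_m - 1/\xi$ with $\xi > 0$ (Lemma~\ref{lem:dual_prox_full}), which holds in the only case of interest where the planning subproblem \eqref{eq:SPPPA-solve} is bounded.
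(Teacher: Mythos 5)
Your proof is correct and follows essentially the same route as the paper. Both arguments hinge on the same quadratic-root identity for the step $\delta_i \coloneqq \tau_i - \tau_{i-1}$: your observation $\tau_i = (1+s_i)^2/(2L_i)$ is algebraically equivalent to the paper's $L_i\delta_i^2 = 2\tau_i$, since $\delta_i = (1+s_i)/L_i$. The only difference is form: you show the ratio $\tau_i(f_i-f_\star)/\tau_{i-1}(f_{i-1}-f_\star)$ collapses to $s_i/(s_i-1)\geq 1$, while the paper shows the difference $\tau_i(f_i-f_\star) - \tau_{i-1}(f_{i-1}-f_\star)$ factors as $(f_{i-1}-f_\star)(L_i\delta_i^2 - \tau_i)/(L_i\delta_i)\geq 0$. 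The additive form is marginally cleaner because it needs only $f_{i-1}-f_\star\geq 0$ (so it covers degenerate cases without extra checks), whereas your multiplicative form requires $s_i>1$ and $f_{i-1}>f_\star$ strictly; you correctly supply these via $\tau' \geq \tau_{n-1}\geq\cdots\geq\tau_0 = 2/L_0 > 0$ and $f_m - f_\star = 1/\xi > 0$ in the bounded case of interest. No gap, just a stylistic trade-off.
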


\begin{lemma}
\label{lem:tight_ineqs_SPPPA}
It holds that $Q_{m,n}=Q_{n,n+1} = \dots = Q_{N-1,N} = 0$.
For $i\in\{n,\dots, N\}$,
\[
Q_{\star,i}=0,\qquad H_i=\tau_i(f_\star -f_i)+\tfrac12\|x_0-y_\star\|^2-\tfrac12\|z_{i+1}-y_\star\|^2=0.
\]
\end{lemma}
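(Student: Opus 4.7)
The plan is to decompose the verification into three natural stages: first the ``neighboring'' identities $Q_{m,n}=Q_{n,n+1}=\cdots=Q_{N-1,N}=0$; then $Q_{\star,i}=0$ for each $i\in\{n,\dots,N\}$; and finally the $H_i=0$ identities, which follow by induction from the first two stages together with a base case $H'=0$ established via strong duality of the planning subproblem.

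For the neighboring identities, I would expand $y_{i-1}-y_i$ using the recurrences defining $x_i$ and $y_i$ (treating $(m,\tau',z')$ as $(n-1,\tau_{n-1},z_n)$ in the $i=n$ case). The result is a linear combination of $y_{i-1}-z_i$ and $g_i$. Since $y_m, z', y_{i-1}, z_i$ all lie in $x_0+\spann\{g_0,\dots,g_{i-2}\}$, and $g_i$ is a scalar multiple of $e_i$, which is orthogonal to this span and to $e_n,\dots,e_{i-1}$, only the $g_i$-component contributes to $\langle g_i, y_{i-1}-y_i\rangle$. A short calculation reveals this contribution to be $\tfrac{1}{L_i}\|g_i\|^2$, which equals $f_{i-1}-f_i$ by the defining relation $f_i = f_{i-1}-\tfrac{1}{L_i}\|g_i\|^2$.

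The same orthogonality strategy handles $Q_{\star,i}=0$. Decomposing $y_\star-y_i=(z_{N+1}-z_{i+1})+(z_{i+1}-y_i)$, the first piece lies in $\spann\{e_{i+1},\dots,e_N\}$ and is orthogonal to $g_i$, while the second piece expands into a term in $\spann\{g_0,\dots,g_{i-1}\}$ (also orthogonal to $g_i$) plus a $g_i$-multiple with coefficient $\tfrac{1}{L_i}-(\tau_i-\tau_{i-1})$. Using the construction identities $(\tau_i-\tau_{i-1})\|g_i\|^2=f_{i-1}-f_\star$ and $\tfrac{1}{L_i}\|g_i\|^2=f_{i-1}-f_i$, the inner product evaluates to $f_\star-f_i$, giving $Q_{\star,i}=0$.

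Combining the first two stages with Lemma~\ref{lem:OPPA_induction} applied at iteration $n$ with $(m,\tau',z')$ and then iteratively at $i>n$ with $(i-1,\tau_{i-1},z_i)$ produces the telescoping identities $H_n = H'+(\tau_n-\tau')Q_{\star,n}+\tau'Q_{m,n} = H'$ and $H_i = H_{i-1}$ for $i>n$, reducing the claim to $H'=0$. This base case is the main obstacle, and it is where strong duality of the planning subproblem becomes essential. Exploiting that $z'-x_0\in\spann\{g_0,\dots,g_{n-1}\}$ is orthogonal to $y_\star-z'=-\sum_{j=n}^N(\tau_j-\tau_{j-1})g_j$, Pythagoras reduces $H'$ to $\tau'(f_\star-f_m)+\tfrac{1}{2}\|x_0-z'\|^2$. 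Lemma~\ref{lem:dual_prox_full} identifies the dual optimal value as $\tfrac{\xi}{2}\|z'-x_0\|^2$; strong duality with the primal optimum $\tau'$, combined with the identification $\xi=1/(f_m-f_\star)$ from Section~\ref{subsubsec:SPPPA-feasibility}, yields $\|z'-x_0\|^2=2\tau'(f_m-f_\star)$, which plugs into the reduced expression to give $H'=0$.
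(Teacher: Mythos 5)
Your proposal is correct and follows essentially the same three-stage plan as the paper: direct orthogonality computations for the neighboring $Q$-identities, then $Q_{\star,i}=0$, then a telescope via Lemma~\ref{lem:OPPA_induction} down to the base case $H'=0$, which you (as the paper does) establish from strong duality of the planning subproblem and the Pythagorean identity. The only superficial difference is that you compute $Q_{\star,i}=0$ directly from the decomposition $y_\star-y_i=(z_{N+1}-z_{i+1})+(z_{i+1}-y_i)$, whereas the paper first reuses $Q_{i-1,i}=0$ to replace $f_i+\langle g_i,y_\star-y_i\rangle$ with $f_{i-1}+\langle g_i,z_{N+1}-y_{i-1}\rangle$; both routes are equally short and rely on the same orthogonality facts.
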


The following pair of propositions establish the key properties enabling $f_\cH$ to serve as a dynamic hard lower bounding instance for proving subgame perfection. First we show that indeed $f_\cH$ interpolates the past observed data and the proposed future values in our definition. Second we show that this construction has a zero-chain property, preventing any proximal point-type method~\eqref{eq:general-form-prox-methods} from discovering more than one new direction $e_i$ per iteration.

\begin{proposition}
\label{prop:interpolating_prox_full}
The function $f_{\cH}$ is proper, closed, and convex, satisfying for every $i\in\cI$, $f_{\cH}(y_i)=f_i$ and $g_i\in\partial f_{\cH}(y_i)$.
\end{proposition}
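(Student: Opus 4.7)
The plan is to first observe that $f_\cH$ is a finite pointwise maximum of affine functions, so it is proper, closed, and convex by inspection. For any max-of-affine function, the interpolation $f_\cH(y_i) = f_i$ together with $g_i \in \partial f_\cH(y_i)$ is equivalent to the $i$-th affine piece being active at $y_i$, i.e., $Q_{i,j} \coloneqq f_i - f_j - \langle g_j, y_i - y_j\rangle \geq 0$ for every $j \in \cI_\star$. Thus the entire proposition reduces to verifying $Q_{i,j} \geq 0$ for all relevant pairs, which I would organize into cases according to whether each index is \emph{past} ($< n$), \emph{future} ($\in \{n, \ldots, N\}$), or $\star$.

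Several of these cases are nearly immediate. The past-past case follows from~\eqref{eq:prox_nonnegative}, since the observed history was generated by a convex function. The future-future case (with $j < i$ both $\geq n$) follows by plugging the second case of Lemma~\ref{lem:inner_product_formulas} into $Q_{i,j}$ and collecting terms to get $Q_{i,j} = \tau_i^{-1}(\tau_i(f_i - f_\star) - \tau_j(f_j - f_\star)) \geq 0$ by Lemma~\ref{lem:SPPPA-monotone}. The cases involving $\star$ are short: $Q_{\star, i} = 0$ for $i \geq n$ is recorded in Lemma~\ref{lem:tight_ineqs_SPPPA}; $Q_{\star, j}$ for $j < n$ reduces to the dual constraint~\eqref{eq:first-dual-constraint-SPPPA} after noting that $y_\star - z' = z_{N+1} - z' \in \mathrm{span}\{e_n, \ldots, e_N\}$ is orthogonal to $g_j$; and $Q_{i, \star} = f_i - f_\star$ (via $g_\star = 0$) is nonnegative using $f_\star < f_m \leq f_i$ for past $i$ and $\tau_i(f_i - f_\star) \geq \tau'(f_m - f_\star) > 0$ for future $i$ via Lemma~\ref{lem:SPPPA-monotone}.

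The substantive work lies in the two mixed past/future cases. For $i < n \leq j$, I would induct on $j$ using the mutual orthogonality of $\{e_k\}_{k \geq n}$ and their joint orthogonality to $\mathrm{span}\{g_0, \ldots, g_{n-1}\}$ to show that both $x_j - x_0$ and $z_j - x_0$ lie in $\mathrm{span}\{g_0, \ldots, g_{n-1}, e_n, \ldots, e_{j-1}\}$. Combined with $y_j = x_j - g_j/L_j$ and $y_i - x_0 \in \mathrm{span}\{g_0, \ldots, g_{n-1}\}$, this collapses $\langle g_j, y_i - y_j\rangle$ to $\|g_j\|^2/L_j$, and the telescoping identity $f_j + \|g_j\|^2/L_j = f_{j-1}$ (with the convention $f_{n-1} = f_m$ when $j = n$) then gives $Q_{i,j} = f_i - f_{j-1} \geq f_i - f_m \geq 0$. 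For $j < n \leq i$, applying the first case of Lemma~\ref{lem:inner_product_formulas} with $\alpha \coloneqq \tau'/\tau_i \in (0,1]$ and expanding $\langle g_j, y_m - y_j\rangle = f_m - f_j - Q_{m,j}$ yields
\[
Q_{i,j} = (f_i - f_\star) + \alpha(f_\star - f_m) + (1-\alpha)\bigl(f_\star - f_j - \langle g_j, z' - y_j\rangle\bigr) + \alpha Q_{m,j},
\]
in which the first two terms combine nonnegatively as $\tau_i^{-1}(\tau_i(f_i - f_\star) - \tau'(f_m - f_\star))$ by Lemma~\ref{lem:SPPPA-monotone}, the third is nonnegative by the dual constraint~\eqref{eq:first-dual-constraint-SPPPA}, and $Q_{m,j} \geq 0$ as a past interpolation condition.

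The main obstacle is this last case $j < n \leq i$: it is the only step where the optimality of the planning subproblem, through its dual constraints, must be combined with the monotonicity of $\tau_i(f_i - f_\star)$, and finding the right regrouping so that each resulting term is manifestly nonnegative is the key algebraic move. The sibling case $i < n \leq j$ is conceptually routine but requires careful span bookkeeping.
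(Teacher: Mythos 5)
Your proposal takes essentially the same route as the paper's proof: a case-by-case verification that $Q_{i,j}\ge 0$, relying on the same key ingredients --- the interpolation conditions from the observed history, Lemma~\ref{lem:inner_product_formulas}, Lemma~\ref{lem:SPPPA-monotone}, Lemma~\ref{lem:tight_ineqs_SPPPA}, and the dual constraint~\eqref{eq:first-dual-constraint-SPPPA}. Your handling of the two mixed cases differs in surface form but not in substance: for $i<n\le j$ you use direct orthogonality plus the telescoping identity $f_j + \|g_j\|^2/L_j = f_{j-1}$, whereas the paper substitutes $f_j = f_\star - \langle g_j, y_\star - y_j\rangle$ via $Q_{\star,j}=0$ and then simplifies --- both arrive at $Q_{i,j}=f_i-f_{j-1}$; for $j<n\le i$ your explicit nonnegative-sum decomposition and the paper's two-step chain of inequalities are algebraic rearrangements of the same bound. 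You are also a bit more careful than the paper in one spot: for $Q_{\star,j}$ with $j<n$, you explicitly observe that $y_\star - z'\in\spann\{e_n,\dots,e_N\}$ is orthogonal to $g_j$, a fact the paper uses silently.

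However, there is one genuine omission: you never treat the pair $n\le i<j$ (both future, with $j>i$). You label the future-future case as ``$j<i$ both $\ge n$,'' handle it via the second case of Lemma~\ref{lem:inner_product_formulas} (which requires $j<i$), and nowhere account for the reversed ordering. The paper handles it with one line by pointing back to the same chain used for $i<n\le j$. The fix in your framework is likewise short: your orthogonality/telescoping argument for $i<n\le j$ only used that $g_j=\|g_j\|e_j$ is orthogonal to $y_i-x_0$ and to $x_j-x_0$, and both facts hold whenever $i<j$ (since $y_i-x_0\in\spann\{g_0,\dots,g_{n-1},e_n,\dots,e_i\}$ for $i\ge n$ and $e_j\perp e_\ell$ for $\ell\le i$). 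This again gives $Q_{i,j}=f_i-f_{j-1}\ge 0$ since the constructed future values $f_n\ge f_{n+1}\ge\cdots$ are nonincreasing. You should state this extension explicitly; otherwise the proof is complete.
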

\begin{proof}
Note that $f_\cH$ must be closed, convex, and proper since it is defined as a finite maximum of affine functions. As occurred in our previous lower bound construction for KLM in Proposition~\ref{prop:feasibility-Kelley}, it suffices to verify nonnegativity of each $Q_{i,j}$ to verify $f_{\cH}(y_i)=f_i$ and $g_i\in\partial f_{\cH}(y_i)$. Again, we do this case-wise:

First consider $i\in\{0,\dots,n-1\}$. If $j\in\{0,\dots,n-1\}$ as well, then $Q_{i,j}\geq 0$ follows since the past observations were generated from some convex function.  If $j\in\{n,\dots,N\}$, then
\begin{align}
    Q_{i,j} &= f_i - \left(f_j + \langle g_j, y_i-y_j\rangle\right)\nonumber \\
    &=f_i - \left(f_\star - \langle g_j, y_\star-y_j\rangle + \langle g_j, y_i-y_j\rangle\right)\nonumber\\
    &=f_i - f_\star + \langle g_j, y_\star - y_i\rangle\nonumber\\
    &=f_i - f_\star - (\tau_{j}-\tau_{j-1})\|g_j\|^2\nonumber\\
    &=f_i - f_{j-1} \geq 0 \label{eq:first-main-case-SPPPA}
\end{align}
where the second equality uses that $Q_{\star,j}=0$ by Lemma~\ref{lem:tight_ineqs_SPPPA}.
If $j=\star$, then
$$ Q_{i,\star} = f_i - \left(f_\star + \langle g_\star, y_i - y_\star\rangle\right) = f_i - f_\star = f_i - f_m + \frac{1}{\xi} \geq 0.$$

Next consider $i\in\{n,\dots, N\}$. If $j\in\{0,\dots,n-1\}$, then
\begin{align*}
    Q_{i,j}
    &= f_i - f_j - \langle g_j, y_i - y_j\rangle\\
    &= f_i - f_j - \Bigl(\langle g_j, z' - y_j\rangle + \frac{\tau'}{\tau_i}\langle g_j, y_m - z'\rangle\Bigr)\\
    &= f_i - \left(1-\frac{\tau'}{\tau_i}\right)\left(f_j + \langle g_j, z' - y_j\rangle\right)
    - \frac{\tau'}{\tau_i}\left(f_j + \langle g_j, y_m - y_j\rangle\right)\\
    &\geq f_i - \left(1-\frac{\tau'}{\tau_i}\right)f_\star - \frac{\tau'}{\tau_i} f_m\\
    &= \frac{\tau_i(f_i-f_\star) - \tau'(f_m-f_\star)}{\tau_i} \geq 0,
\end{align*}
where the second equality uses Lemma~\ref{lem:inner_product_formulas}, the first inequality uses~\eqref{eq:first-dual-constraint-SPPPA} and $Q_{m,j}\geq 0$ and the second inequality uses Lemma~\ref{lem:SPPPA-monotone}.
If $j\in\{n,\dots, i-1\}$, then
\begin{align*}
		Q_{i,j} &= f_i - f_j - \langle g_j, y_i - y_j\rangle\\
		&= f_i - f_j + \frac{\tau_i - \tau_j}{\tau_i}(f_j - f_\star)\\
		&= \frac{1}{\tau_i}\left(\tau_i(f_i - f_\star) - \tau_j(f_j - f_\star)\right) \geq 0,
\end{align*}
where the second equality uses Lemma~\ref{lem:inner_product_formulas} and the final inequality follows from Lemma~\ref{lem:SPPPA-monotone}.
If $j\in\{i+1,\dots, N\}$, then this follows by the same reasoning presented in the chain of equalities~\eqref{eq:first-main-case-SPPPA}.
If $j=\star$, then
$$ Q_{i,\star} = f_i - \left(f_\star + \langle g_\star, y_i - y_\star\rangle\right) = f_i - f_\star \geq 0.$$

Finally, consider $i=\star$. Then for $j\in\{0,\dots,n-1\}$, having $Q_{\star,j}\geq 0$ is precisely guaranteed by the first dual constraint~\eqref{eq:first-dual-constraint-SPPPA}. For $j\in\{n,\dots,N\}$, $Q_{\star,j}=0$ is guaranteed by Lemma~\ref{lem:tight_ineqs_SPPPA}.
\end{proof}

\begin{proposition}
\label{prop:zero_chain_prox_full}
Suppose $0\leq n\leq N$ and the history $\cH = \{(x_i,f_i,g_i, y_i,\tau_i,z_{i+1})\}_{i=0}^{n-1}$ is given. Let $f_\cH$ denote the function constructed in \eqref{eq:hard_func_SPPPA} and Section~\ref{subsec:spppa_lowerbound}.
Let $j\in\{n,\dots,N-1\}$. If 
$x\in x_0+\mathrm{span}\{g_0,\dots,g_{j-1}\},$
then
\[
    \mathrm{prox}_{L_{j},f_{\cH}}(x) \in x_0+\mathrm{span}\{g_0,\dots,g_{j}\}.
\]
\end{proposition}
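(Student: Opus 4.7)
The plan is to identify $y^\star := \mathrm{prox}_{L_j, f_\cH}(x)$ with the minimizer of the prox objective restricted to the candidate affine subspace $x_0 + P^+$, where $P^+ := \mathrm{span}\{g_0,\dots,g_j\}$, and then verify that this restricted minimizer satisfies the unconstrained proximal KKT condition. By strict convexity of the prox, this would force $y^\star \in x_0 + P^+$.

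First I would decompose any $y = x_0 + p + w$ with $p \in P^+$ and $w \in (P^+)^\perp$, and write $x = x_0 + p_x$ with $p_x \in P \subseteq P^+$. Then: (i) for $i \leq j$ or $i = \star$, a short induction using $y_i = x_i - g_i/L_i$ and the OPPA-style recursion shows $g_i \in P^+$ and $y_i - x_0 \in P^+$, so $\phi_i(y) = f_i + \langle g_i, y - y_i\rangle$ depends only on $p$; (ii) for $i > j$, using $g_i = \alpha_i e_i$ with $e_i \perp P^+$, the identity $\langle e_i, y_i - x_0\rangle = -\alpha_i/L_i$ (which comes from $y_i = x_i - g_i/L_i$ together with the orthogonality of $x_i - x_0$ to $e_i$), and the recursion $f_i = f_{i-1} - \alpha_i^2/L_i$, one obtains the closed form $\phi_i(y) = f_{i-1} + \alpha_i\langle e_i, w\rangle$. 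In particular, along $y \in x_0 + P^+$ the maximum $\max_{i>j}\phi_i(y)$ reduces to the constant $f_j$.

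Letting $p^\sharp$ denote the unique minimizer on $P^+$ of $G(p) := \max\{A(p), f_j\} + \frac{L_j}{2}\|p - p_x\|^2$ with $A(p) := \max_{i \leq j \text{ or } \star}\phi_i(x_0+p)$, set $y^\sharp := x_0 + p^\sharp$. A clean observation is $A(p_x) \geq \phi_j(x_0 + p_x) = f_{j-1} > f_j$, using that $p_x \in P$ is orthogonal to $e_j$; this pins the restricted minimizer to the regime $A \geq f_j$. When $A(p^\sharp) > f_j$ strictly, the active set at $y^\sharp$ consists only of indices $i \leq j$ or $i = \star$, and the restricted KKT certificate $L_j(p_x - p^\sharp) \in \partial A(p^\sharp)$ is immediately a certificate for $L_j(x - y^\sharp) \in \partial f_\cH(y^\sharp)$, completing the argument.

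The main technical obstacle is the boundary case $A(p^\sharp) = f_j$. The index $j+1$ becomes active at $y^\sharp$, so the unconstrained subdifferential $\partial f_\cH(y^\sharp) = \mathrm{conv}(\partial A(p^\sharp) \cup \{g_{j+1}\})$ picks up the out-of-$P^+$ vector $g_{j+1}$, whereas the restricted subdifferential of $\max\{A, f_j\}$ instead picks up $\{0\}$. Lifting a restricted certificate into an unconstrained one requires that the multiplier assigned to the zero-piece in the restricted KKT is in fact zero, equivalently that $L_j(p_x - p^\sharp)$ lies in $\partial A(p^\sharp)$ itself rather than only in $\mathrm{conv}(\partial A(p^\sharp) \cup \{0\})$. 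I would resolve this by a directional-derivative analysis at $p^\sharp$: motion into the $A < f_j$ half-space incurs a strictly positive quadratic cost since $p^\sharp \neq p_x$, which forces the first-order optimality at $p^\sharp$ to be witnessed entirely by an element of $\partial A(p^\sharp)$, and hence the restricted certificate lifts cleanly to the unconstrained one.
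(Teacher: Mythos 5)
Your restriction-to-subspace strategy is reasonable and, away from the boundary, it works for the same reason the paper's proof works. However, the resolution you propose for the boundary case $A(p^\sharp) = f_j$ is not correct as stated, and this is a genuine gap. The first-order optimality condition for $p^\sharp$ minimizing $G(p) = \max\{A(p), f_j\} + \tfrac{L_j}{2}\|p-p_x\|^2$ over $P^+$ gives only
\[
L_j(p_x - p^\sharp) \in \mathrm{conv}\bigl(\partial A(p^\sharp)\cup\{0\}\bigr),
\]
and your directional-derivative argument does not upgrade this to $L_j(p_x-p^\sharp)\in\partial A(p^\sharp)$. The ``strictly positive quadratic cost of moving into $\{A<f_j\}$'' claim is about a cone of descent directions, not about pinning the certificate inside $\partial A(p^\sharp)$, and it fails for generic $A$. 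For instance, with $A(p)=\max\{p_1,p_2\}$, $f_j=0$, $p_x=(1,1)$ and $L_j=1/4$, the restricted minimizer is $p^\sharp=(0,0)$ with $A(p^\sharp)=f_j$, yet $L_j(p_x-p^\sharp)=(1/4,1/4)$ lies strictly inside $\mathrm{conv}\{e_1,e_2,0\}$ and not on $\partial A(0)=\mathrm{conv}\{e_1,e_2\}$, so the lift would fail. That this situation does not actually occur for the $A$ produced by the construction must be proved using the specific structure, and your argument never invokes it.

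The missing ingredient, and the paper's way around it, is to exploit orthogonality of $g_j$ to $\spann\{g_0,\dots,g_{j-1}\}$ together with the interpolation inequalities $Q_{j,i}\ge 0$. The paper defines $\tilde f(x)=\max_{i\le j}\{f_i+\langle g_i,x-y_i\rangle\}\le f_\cH(x)$ and sets $y=\mathrm{prox}_{L_j,\tilde f}(x)$, so that $g=L_j(x-y)$ is a convex combination of $g_0,\dots,g_j$ and hence $\langle g_j,g\rangle\le\|g_j\|^2$. A direct computation then gives $\tilde f(y)\ge \phi_j(y)=f_j+\tfrac{1}{L_j}\langle g_j,g_j-g\rangle\ge f_j$, which, combined with $Q_{j,i}\ge 0$ and orthogonality, yields $\tilde f(y)=f_\cH(y)$; since $\tilde f\le f_\cH$ pointwise, $y$ also minimizes $f_\cH+\tfrac{L_j}{2}\|\cdot-x\|^2$. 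In your framework this corresponds exactly to the step you skip: one must show $\phi_j(x_0+p^\sharp)\ge f_j$, which (via $\langle g_j,L_j(p_x-p^\sharp)\rangle\le\|g_j\|^2$) guarantees that the minimizer of $A'+\text{quad}$ already lies in $\{A'\ge f_j\}$ with $L_j(p_x-p^\sharp)\in\partial A'(p^\sharp)$, so the degenerate multiplier on the constant $f_j$ piece never arises. Without that computation, your boundary-case argument does not go through.
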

\begin{proof}
Fix $j\in \{n,\dots,N-1\}$ and define
$$ \tilde f(x) = \max_{i\leq j}\{f_i + \langle g_i, x - y_i\rangle\} \leq f_\cH(x). $$

Fix an $x\in x_0 + \mathrm{span}\{g_0,\dots,g_{j-1}\}$.
Let $y = \mathrm{prox}_{L_{j},\tilde f}(x)$.
We know that $g = L_j(x-y)\in\partial \tilde f(x)$ is a convex combination of $\{g_0,\dots,g_j\}$.
Then, as $x\in x_0 + \mathrm{span}\{g_0,\dots,g_{j-1}\}$, we deduce that $y = x - g/L_j \in x_0 + \mathrm{span}\{g_0,\dots,g_{j}\}$.
Recall that we defined $g_j$ to be orthogonal to $g_i$ for all $i< j$. Thus, it follows that $\langle g_j, g\rangle\leq \|g_j\|^2$.
Hence for any $i>j$, we observe that
\begin{align*}
    \tilde f(y) & \geq f_j + \langle g_j, y-y_j\rangle\\ 
    & = f_j + \left\langle g_j, \left(x - \frac{1}{L_j}g\right) - \left(x_{j} - \frac{1}{L_j}g_j\right)\right\rangle\\
    & = f_j + \frac{1}{L_j}\left\langle g_j, g_j - g\right\rangle\\
    & \geq f_j\\
    & \geq f_i + \langle g_i, y_j - y_i\rangle\\
    & = f_i + \langle g_i, y - y_i\rangle,
\end{align*}
where the first line lower bounds $\tilde f$ by its final linear term, the second and third apply definitions and simplify, the fourth uses $\langle g_j, g \rangle \leq \|g_j\|^2$, the fifth uses $Q_{j,i}\geq 0$ and the last uses orthogonality of $g_i$ to $y_j-y$. Similarly, considering $i=\star$, we have that $\tilde f(y) \geq f_j \geq f_\star + \langle g_\star, y-y_\star\rangle$. We deduce that $f_{\cH}(y) = \tilde f(y)$.

Hence, for all $y'\in\mathbb{R}^d$, we have that
\begin{align*}
    f_\cH(y) + \frac{L_j}{2}\|y-x\|^2 = \tilde f(y) + \frac{L_j}{2}\|y-x\|^2 \leq \tilde f(y') + \frac{L_j}{2}\|y'-x\|^2 \leq f_\cH(y') + \frac{L_j}{2}\|y'-x\|^2,
\end{align*}
where the first inequality uses the definition of $y$ and the second inequality uses the fact that $\tilde f\leq f_\cH$ pointwise. We deduce that $\mathrm{prox}_{L_{j},f_{\cH}}(x) = y \in x_0+\mathrm{span}\{g_0,\dots,g_{j}\}$.
\end{proof}

\subsection{Proof of Subgame Perfection}
\label{subsec:SPPPA-perfection}

\begin{theorem}
\label{thm:SPPPA-perfection}
Assume $d\geq N+1$. Suppose $0\leq n \leq N$ and 
the history $\mathcal{H}$ is given.
Let $f_{\cH}$ denote the function constructed in~\eqref{eq:hard_func_SPPPA} and Section~\ref{subsec:spppa_lowerbound}. Then any method $A$ of the form~\eqref{eq:general-form-prox-methods} generating $x_0,\dots, x_{n-1}$ when applied to $f_{\cH}$ has terminal iterate $y_N^A $ satisfying
\[
\frac{f_{\cH}(y_N^A)-f_\star}{\tfrac{1}{2}\|x_0-y_\star\|^2} \ge \Psi_n.
\]
Consequently, combined with Theorem~\ref{thm:SPPPA-rate}, SPPPA is subgame perfect.
\end{theorem}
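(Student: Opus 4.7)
The strategy combines the already-established structural properties of the hard instance $f_\cH$ — validity as a problem instance (Proposition~\ref{prop:interpolating_prox_full}) and the zero chain (Proposition~\ref{prop:zero_chain_prox_full}) — with the tight identity from Lemma~\ref{lem:tight_ineqs_SPPPA} that links the worst-case objective value back to $\Psi_n$. First I would dispose of the trivial case: if the planning subproblem \eqref{eq:SPPPA-solve} is unbounded, SPPPA terminates outputting an exact minimizer, so $\Psi_n = 0$ and the claim is vacuous. Otherwise, Proposition~\ref{prop:interpolating_prox_full} guarantees $f_\cH$ is closed convex proper with minimizer $y_\star$ and agrees with the given history, so any $A$ of the form \eqref{eq:general-form-prox-methods} applied to $(f_\cH, x_0)$ reproduces $\cH$ and then generates further iterates $x_n^A, y_n^A, \dots, x_N^A, y_N^A$.

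Next I would inductively apply Proposition~\ref{prop:zero_chain_prox_full} to show $x_j^A \in x_0 + \spann\{g_0,\dots,g_{j-1}\}$ for every $j = n,\dots,N$. The base case $j = n$ is the span condition on $A$ combined with the history. The inductive step uses a fact implicit in Proposition~\ref{prop:zero_chain_prox_full}'s proof: the oracle-returned subgradient $L_j(x_j^A - y_j^A)$ itself lies in $\spann\{g_0,\dots,g_j\}$. Crucially, this places $x_N^A - x_0$ orthogonal to $e_N$.

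The main technical step is the final prox step $y_N^A = \mathrm{prox}_{L_N,f_\cH}(x_N^A)$. Extending the zero-chain argument to $j = N$, the subgradient $L_N(x_N^A - y_N^A)$ equals a convex combination $\sum_{i=0}^N \lambda_i g_i$, so the coefficient $\lambda_N$ on $g_N = \alpha_N e_N$ satisfies $\lambda_N \leq 1$. This cap is what prevents $y_N^A$ from reaching $y_\star = z_{N+1}$, which sits farther along $-e_N$ than a single prox step from $x_N^A$ can travel. A short calculation using the explicit formulas $y_N = x_N - g_N/L_N$ and $y_N^A = x_N^A - L_N^{-1}\sum_i \lambda_i g_i$, together with the orthogonality of $e_N$ to $\spann\{g_0,\dots,g_{N-1}\}$, yields $\langle g_N, y_N^A - y_N\rangle = \alpha_N^2(1-\lambda_N)/L_N \geq 0$.

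Combining, the $N$-th affine piece of $f_\cH$ evaluated at $y_N^A$ gives $f_\cH(y_N^A) \geq f_N + \langle g_N, y_N^A - y_N\rangle \geq f_N$, while Lemma~\ref{lem:tight_ineqs_SPPPA} (with $y_\star = z_{N+1}$) gives $H_N = 0$, rearranging to $f_N - f_\star = \frac{1}{2\tau_N}\|x_0 - y_\star\|^2 = \Psi_n \cdot \frac{1}{2}\|x_0 - y_\star\|^2$, which yields the bound. The main obstacle is the third paragraph's final-prox analysis: the zero chain alone places $y_N^A$ in a span that already contains the minimizer $y_\star$, so one must additionally exploit the structure of the prox operator — specifically the convex-combination bound $\lambda_N \leq 1$ — to rule out $y_N^A$ moving too far along $-e_N$ toward $y_\star$.
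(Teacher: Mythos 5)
Your proposal matches the paper's proof essentially step for step: the same zero-chain induction from Proposition~\ref{prop:zero_chain_prox_full} to confine $y_N^A$ to $x_0+\spann\{g_0,\dots,g_N\}$, the same observation that the prox subgradient $g_N^A=L_N(x_N^A-y_N^A)$ lies in the convex hull of $\{g_0,\dots,g_N,g_\star\}$ so its $g_N$-coefficient is at most $1$ (your $\lambda_N\le 1$ is the paper's $\langle g_N,g_N^A\rangle\le\|g_N\|^2$), the same lower bound $f_\cH(y_N^A)\ge f_N+\langle g_N,y_N^A-y_N\rangle\ge f_N$, and the same appeal to $H_N=0$ from Lemma~\ref{lem:tight_ineqs_SPPPA} to convert $f_N-f_\star$ into $\Psi_n\cdot\tfrac12\|x_0-y_\star\|^2$. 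The only cosmetic difference is that the paper writes the final inner-product computation as $\tfrac{1}{L_N}\langle g_N,g_N-g_N^A\rangle$ rather than in your explicit coefficient form $\alpha_N^2(1-\lambda_N)/L_N$.
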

\begin{proof}
Let $x_j^A, y_j^A, g_j^A$ denote the sequence of iterates and subgradients generated by some method of the form~\eqref{eq:general-form-prox-methods} consistent with the given history $\cH$ prior to iteration $n$. Note, for $j\geq n$, that $x_j,y_j,g_j$ may be distinct from $x_j^A, y_j^A, g_j^A$.
By Proposition~\ref{prop:zero_chain_prox_full}, if $x_j^A\in x_0+\mathrm{span}\{g_0,\dots,g_{j-1}\}$ then 
$y_j^A=\mathrm{prox}_{L_j,f_{\cH}}(x_j^A)\in x_0+\mathrm{span}\{g_0,\dots,g_j\}$
and hence $g_j^A \in \mathrm{span}\{g_0,\dots,g_j\}$.
Since any method of the form~\eqref{eq:general-form-prox-methods} sets $x_{j+1}^A\in x_0+\mathrm{span}\{g_0^A,\dots,g_j^A\}$, induction over $j=n,\dots,N$ yields
\[
y_N^A\in x_0+\mathrm{span}\{g_0^A,\dots,g_N^A\}\subseteq x_0+\mathrm{span}\{g_0,\dots,g_N\}.
\]
Further, $g_N^A = L_N(x_N^A - y_N^A)\in\partial f_\cH(y_N^A)$ must lie in the convex hull of $\{g_0,\dots, g_N, g_\star\}$. Noting $g_N$ is orthogonal to all other $g_i$, we have $\langle g_N, g_N^A\rangle \leq \|g_N\|^2$.
Consequently,
\begin{align*}
    f_{\cH}(y_N^A)&\geq f_N+\langle g_N,y_N^A-y_N\rangle\\
    &= f_N+\left\langle g_N,\left(x_N^A - \frac{1}{L_N}g_N^A\right)-\left(x_N - \frac{1}{L_N}g_N\right)\right\rangle\\
    &=f_N + \frac{1}{L_N}\langle g_N, g_N - g_N^A\rangle\\
    &\geq f_N = f_\star + \tfrac{1}{2\tau_{N}}\|x_0-y_\star\|^2
\end{align*}
where the first considers $i=N$ in the definition of $f_\cH$, the second and third apply definitions and simplify, and the fourth uses that $\langle g_N, g_N^A\rangle \leq \|g_N\|^2$, and the final equality uses that $H_N=0$ due to Lemma~\ref{lem:tight_ineqs_SPPPA}. Since $\Psi_n = 1/\tau_{n,N} = \frac{1}{\tau_N}$, the claimed lower bound holds.
\end{proof}
 \section{Conclusion} \label{sec:conclusion}
We established subgame perfect algorithms for the settings of subgradient methods and proximal point algorithms. In particular, for subgradient methods, the Kelley-cutting plane Like Method of~\cite{drori2016Kelley} is not only minimax optimal (as previously known), but also subgame perfect. For proximal point methods, we introduced a new extension of OPPA, which we call SPPPA, that dynamically optimizes its induction at every step and is subgame perfect. By constructing dynamic lower bounding instances as a function of observed first-order responses, we established that these methods are guaranteed to not only attain the minimax optimal convergence guarantee over the family of every convex problem instance, but over every subclass of problem instances restricted to agree with the first-order information seen up to any iteration $n$. 

This game-theoretic notion was first applied to gradient methods in smooth convex minimization~\cite{SPGM} and extended to adaptive backtracking settings in~\cite{ASPGM}. Many more settings exist where minimax optimal methods have been developed that represent fruitful opportunities to develop subgame perfect methods enabling optimal adaptation. Optimal methods for general fixed point and monotone operator settings have been considered by~\cite{lieder2021convergence,kim2021AcceleratedPPM}, representing one opportunity. A minimax optimal proximal gradient method (OptISTA) for convex, composite optimization was recently developed by~\cite{OPTIsta}. A subgame perfect extension of this also represents an opportunity. Additionally, it is known that there are multiple minimax optimal subgradient methods (for example, the classic subgradient method and the subspace search elimination method of~\cite{drori2019efficient}). It would be of interest to determine if there exist other subgame perfect subgradient methods, distinct from KLM. 
\paragraph{Acknowledgments.} This work was supported in part by the Air Force Office of Scientific Research under award number FA9550-23-1-0531. Benjamin Grimmer was additionally supported as a fellow of the Alfred P. Sloan Foundation.

{\small
\bibliographystyle{unsrt}

}

\section{Deferred Proofs of Lemmas in SPPPA's Lower Bound}

\subsection{Proof of Lemma~\ref{lem:inner_product_formulas}}
We handle the $i=n$ and $i>n$ cases separately.
First, suppose $i=n$. Since $i>j$, necessarily $j<n$, so only the first case of the lemma statement is relevant.
We write
\[
\langle g_j, y_n - y_j\rangle
= \langle g_j, y_n - z'\rangle + \langle g_j, z' - y_j\rangle.
\]
By construction, $g_n$ is orthogonal to $\spann\{g_0,\dots,g_{n-1}\}$, and hence to $g_j$ for all $j<n$. Since $y_n = x_n - \tfrac{1}{L_n}g_n$, we have
\[
\langle g_j, y_n - z'\rangle
= \langle g_j, x_n - z'\rangle.
\]
From the definition of $x_n$ as $\frac{\tau'}{\tau_n}y_m + \frac{\tau_n-\tau'}{\tau_n}z'$, we then have that
\[
\langle g_j, x_n - z'\rangle
= \frac{\tau'}{\tau_n}\,\langle g_j, y_m - z'\rangle.
\]
Combining the displays gives the claim in this first case of $i=n$.

For the remainder of the proof, assume $i>n$. We expand
\begin{equation}\label{eq:gj_yi_yj_split}
	\langle g_j, y_i - y_j\rangle
	= \langle g_j, y_i - z_i\rangle
	+ \langle g_j, z_i - y_j\rangle.
\end{equation}
By definition, $z_i=z'- \sum_{\ell=n}^{i-1}(\tau_{\ell}-\tau_{\ell-1})g_\ell$. Thus the second term in this expansion is
\begin{equation} \label{eq:second-expansion-term}
	\langle g_j, z_i - y_j\rangle = \begin{cases}
		\langle g_j, z'-y_j\rangle & \text{if } j <n\\
		f_\star-f_{j} & \text{if } j\geq n
	\end{cases}
\end{equation}
where the $j\geq n$ case uses that $\|g_j\|^2 = \frac{f_{j-1}-f_\star}{\tau_{j}-\tau_{j-1}}$ and $f_j = f_{j-1} - \frac{1}{L_j}\|g_j\|^2$.

For the first term in this expansion, we use the inductive definitions of $x_i,y_i,z_{i+1}$ and $\tau_{i}$ to write
\begin{align*}
	 \langle g_j, y_i - z_i\rangle &=  \langle g_j, x_i - z_i\rangle\\
	 &=\frac{\tau_{i-1}}{\tau_i}\langle g_j, y_{i-1} - z_i\rangle\\
	 &=\begin{cases}
	 	\frac{\tau_n}{\tau_{n+1}}\langle g_j, y_n - z' + (\tau_n-\tau')g_{i-1}\rangle & \text{if } i=n+1\\
	 	\frac{\tau_{i-1}}{\tau_{i}}\langle g_j, y_{i-1} - z_{i-1} + (\tau_{i-1}-\tau_{i-2})g_{i-1}\rangle & \text{if } i\geq n+2
	 \end{cases}
\end{align*}
Observing that if $j<i-1$, we have $\langle g_j,g_{i-1}\rangle=0$, we can unroll this recurrence, giving
\begin{align*}
	\langle g_j, y_i - z_i\rangle &=\begin{cases}
		\frac{\tau'}{\tau_{i}}\langle g_j, y_m - z'\rangle & \text{if } j<n,\\[0.25em]
		\frac{\tau_n}{\tau_{i}}\langle g_n, y_n - z' + (\tau_{n}-\tau')g_{n}\rangle & \text{if } j=n,\\[0.25em]
		\frac{\tau_j}{\tau_{i}}\langle g_j, y_j - z_j + (\tau_{j}-\tau_{j-1})g_{j}\rangle & \text{if } j>n.
	\end{cases}
\end{align*}
In the latter two cases, we use that $x_n,z'\in x_0+\spann\{g_0,\dots,g_{n-1}\}$ and $x_j,z_j\in x_0+\spann\{g_0,\dots,g_{j-1}\}$ so that $\langle g_n,x_n-x_0\rangle=\langle g_n,z'-x_0\rangle=0$ and $\langle g_j,x_j-x_0\rangle=\langle g_j,z_j-x_0\rangle=0$, together with $y_n = x_n - \tfrac{1}{L_n}g_n$ and $y_j = x_j - \tfrac{1}{L_j}g_j$, and the identities
\[
\|g_n\|^2 = \frac{f_{n-1}-f_\star}{\tau_n-\tau'} ,\quad f_n = f_{n-1}-\frac{1}{L_n}\|g_n\|^2,\qquad
\|g_j\|^2 = \frac{f_{j-1}-f_\star}{\tau_j-\tau_{j-1}} ,\quad f_j = f_{j-1}-\frac{1}{L_j}\|g_j\|^2,
\]
to conclude that
\[
\langle g_n, y_n - z' + (\tau_{n}-\tau')g_{n}\rangle = f_n - f_\star,\qquad
\langle g_j, y_j - z_j + (\tau_{j}-\tau_{j-1})g_{j}\rangle = f_j - f_\star.
\]
Applying these simplifications, we find
\begin{equation}\label{eq:first-expansion-term}
	\langle g_j, y_i - z_i\rangle =\begin{cases}
		\frac{\tau'}{\tau_{i}}\langle g_j, y_m - z'\rangle & \text{if } j<n,\\[0.25em]
		\frac{\tau_j}{\tau_{i}}(f_j - f_\star) & \text{if } j\geq n.
	\end{cases}
\end{equation}
Combining~\eqref{eq:second-expansion-term} and~\eqref{eq:first-expansion-term} yields our claimed formula when $i>n$, completing our proof.

\subsection{Proof of Lemma~\ref{lem:SPPPA-monotone}}
Define $A_i = \tau_i(f_i-f_\star)$ as our main quantity of interest in this lemma.
For convenience and as a minor abuse of notation, we introduce an index $i=n-1$ with
\[
\tau_{n-1} \coloneqq \tau', \qquad f_{n-1} \coloneqq f_m, \qquad
A_{n-1} \coloneqq \tau'(f_m - f_\star).
\]
With this convention, our construction of future iterate values can be written uniformly as follows: for every $i\in\{n,\dots,N\}$,
\begin{align*}
	\delta_i &= \tau_i - \tau_{i-1}
	= \frac{1}{L_i}\left(1+\sqrt{1+2L_i\tau_{i-1}}\right),\\
	\|g_i\|^2 &= \frac{f_{i-1} - f_\star}{\delta_i}, \qquad f_i = f_{i-1} - \frac{1}{L_i}\|g_i\|^2.
\end{align*}
For any $i\ge n$, we have
$f_i - f_\star
= f_{i-1} - \frac{1}{L_i}\|g_i\|^2 - f_\star
= (f_{i-1} - f_\star)\left(1 - \frac{1}{L_i\delta_i}\right),$
and so
\begin{align*}
	A_i - A_{i-1}
	&= \tau_i(f_i - f_\star) - \tau_{i-1}(f_{i-1} - f_\star)\\
	&= (f_{i-1} - f_\star)\left(\tau_i\left(1 - \frac{1}{L_i\delta_i}\right) - \tau_{i-1}\right).
\end{align*}
By construction, $\delta_i>0$ and $\|g_i\|^2\ge 0$, so $f_{i-1}-f_\star = \|g_i\|^2\delta_i\ge 0$. Hence it suffices to show
\begin{equation}\label{eq:Ai_step_bracket}
	\tau_i\left(1 - \frac{1}{L_i\delta_i}\right) - \tau_{i-1} \;\ge\; 0
	\qquad\text{for all }i\in\{n,\dots,N\}.
\end{equation}
Using $\delta_i = \tau_i - \tau_{i-1}$, we rewrite
\begin{align*}
	\tau_i\left(1 - \frac{1}{L_i\delta_i}\right) - \tau_{i-1}
	= \left(\tau_i - \tau_{i-1}\right) - \frac{\tau_i}{L_i\delta_i}
	= \delta_i - \frac{\tau_i}{L_i\delta_i}
	= \frac{L_i\delta_i^2 - \tau_i}{L_i\delta_i}.
\end{align*}
Since $\delta_i,L_i>0$, it remains to verify that $L_i\delta_i^2 \ge \tau_i.$
By definition, $\delta_i$ is a root of the quadratic equation
\begin{equation*}
    \frac{L_i\delta_i^2}{2} - \delta_i - \tau_{i-1} = 0.
\end{equation*}
Recognizing, $\tau_{i-1} + \delta_i = \tau_i$, we deduce that $L_i\delta_i^2 \geq \frac{L_i\delta_i^2}{2} = \tau_i$, completing the proof.

\subsection{Proof of Lemma~\ref{lem:tight_ineqs_SPPPA}}
First, $Q_{i-1,i}=0$ for all $i>n$:
\begin{align*}
	f_{i} + \langle g_i, y_{i-1} - y_i\rangle &= f_{i} + \left\langle g_i, y_{i-1} - \left( \frac{\tau_{i-1}}{\tau_i}y_{i-1}+\frac{\tau_{i}-\tau_{i-1}}{\tau_i}z_i - \frac{1}{L_i}g_i\right) \right\rangle
	= f_i + \frac{1}{L_i}\|g_i\|^2 = f_{i-1}
\end{align*}
where the second equality uses that $g_i$ is orthogonal to $\mathrm{span}\{g_0,\dots,g_{i-1}\}$ which contains both $y_{i-1}-x_0$ and $z_i-x_0$. Second, $Q_{\star, i}=0$ for all $i>n$, as
\begin{align*}
	f_i + \langle g_i, y_\star - y_i\rangle & = f_{i-1} + \langle g_i, z_{N+1} - y_{i-1}\rangle = f_{i-1} - (\tau_{i}-\tau_{i-1})\|g_i\|^2 = f_\star
\end{align*}
where the first equality uses that $Q_{i-1,i}=0$ and substitutes the definition $y_\star=z_{N+1}$, the second uses the definition of $z_{N+1}$ and orthogonality of each $e_i$, and the third uses the fact that $\|g_i\|^2 = \frac{f_{i-1}-f_\star}{\tau_{i} - \tau_{i-1}}$.
For both arguments above, the case of $i=n$ is identical, replacing $\tau_{i-1}$, $z_i$, $y_{i-1}$, and $f_{i-1}$ with $\tau'$, $z'$, $y_m$, and $f_m$ respectively.

From strong duality (see Lemma~\ref{lem:dual_prox_full}), we know that the primal/dual optimizers
$(\mu,\lambda_\star)$ and $(\xi,w)$ with the associated
$z'=x_0+Z\mu-G\lambda_\star$ and $\frac{1}{\xi}=f_m-f_\star$ have
\begin{equation}\label{eq:phi_value_relation_full_proof}
	\tau' = \frac{\xi}{2}\|z'-x_0\|^2,\qquad \tau'(f_\star-f_m)=-\frac12\|z'-x_0\|^2.
\end{equation}
Evaluating $H'$ at $y_\star=z_{N+1}$, using \eqref{eq:phi_value_relation_full_proof} and the Pythagorean theorem, yields
\[
H' = \tau'(f_\star-f_m)+\tfrac12\|x_0-y_\star\|^2 - \tfrac12\|z'-y_\star\|^2 = \tau'(f_\star-f_m)+\tfrac12\|x_0-z'\|^2  =  0.
\]
Finally, we verify that $H_i=0$ for all $i\geq n$. We do so inductively:
\begin{align*}
	H_n  &=  H' + (\tau_n-\tau') Q_{\star,n} + \tau' Q_{m,n}  =  0,\\
	H_{i} &= H_{i-1} + (\tau_i-\tau_{i-1}) Q_{\star,i} + \tau_{i-1} Q_{i-1,i}  = 0 \qquad \text{for\ } i>n
\end{align*}
where the equality on each line uses Lemma~\ref{lem:OPPA_induction} and the second equality on each line recognizes expressions that we have (inductively) shown to be zero.
 
\end{document}